\date{October 28, 2025}
\definecolor{myseagreen}{HTML}{3FBC9D}
\setlist[enumerate]{nosep}
\definecolor{labelkey}{rgb}{0,0.08,0.45}
\definecolor{refkey}{rgb}{0,0.6,0.0}
\definecolor{Brown}{rgb}{0.45,0.0,0.05}
\definecolor{lime}{rgb}{0.00,0.8,0.0}
\definecolor{lblue}{rgb}{0.5,0.5,0.99}
\definecolor{OliveGreen}{rgb}{0,0.6,0}
\definecolor{tyrianpurple}{rgb}{0.4, 0.01, 0.24}
\colorlet{hlcyan}{cyan!30}
\def\namedlabel#1#2{\begingroup
   \def\@currentlabel{#2}%
   \label{#1}\endgroup
}
\newcommand{\seppthree}{\setlength{\itemsep}{-3pt}}
\newcommand{\bM}{\ensuremath{{\mathbf{M}}}}
\newcommand{\bu}{\ensuremath{\mathbf{u}}}
\providecommand{\siff}{\Leftrightarrow}
\newcommand{\thalb}{\ensuremath{\tfrac{1}{2}}}
\newcommand{\menge}[2]{\big\{{#1}~\big |~{#2}\big\}}
\newcommand{\To}{\ensuremath{\rightrightarrows}}
\newcommand{\fenv}[1]%
{\ensuremath{\,\overrightarrow{\operatorname{env}}_{#1}}}
\newcommand{\benv}[1]%
{\ensuremath{\,\overleftarrow{\operatorname{env}}_{#1}}}
\newcommand{\pair}[2]{\left\langle{{#1},{#2}}\right\rangle}
\newcommand{\scal}[2]{\left\langle{#1},{#2}  \right\rangle}
\newcommand{\Tt}{\ensuremath{\widetilde{T}}}
\newcommand{\RR}{\ensuremath{\mathbb R}}
\newcommand{\RP}{\ensuremath{\mathbb{R}_+}}
\newcommand{\RM}{\ensuremath{\mathbb{R}_-}}
\newcommand{\RX}{\ensuremath{\,\left]-\infty,+\infty\right]}}
\newcommand{\dom}{\ensuremath{\operatorname{dom}}\,}
\DeclareMathOperator*{\argmin}{argmin}
\newcommand{\inte}{\ensuremath{\operatorname{int}}}
\newcommand{\ran}{\ensuremath{{\operatorname{ran}}\,}}
\newcommand{\rnk}{\ensuremath{{\operatorname{rank}}\,}}
\newcommand{\zer}{\ensuremath{\operatorname{zer}}}
\newcommand{\sol}{\ensuremath{\operatorname{sol}}}
\newcommand{\cspan}{\ensuremath{\overline{\operatorname{span}}\,}}
\newcommand{\Fix}{\ensuremath{\operatorname{Fix}}}
\newcommand{\Id}{\ensuremath{\operatorname{Id}}}
\newcommand{\pinf}{\ensuremath{+\infty}}
\newcommand{\bT}{\ensuremath{{\mathbf{T}}}}
\newcommand{\bS}{\ensuremath{{\mathbf{S}}}}
\newcommand{\bA}{\ensuremath{{\mathbf{A}}}}
\newcommand{\bK}{\ensuremath{{\mathbf{K}}}}
\newcommand{\bKT}{\ensuremath{{\mathbf{S}}}} 
\newcommand{\bZ}{\ensuremath{{\mathbf{Z}}}}
\def\th@plain{%
	\thm@notefont{}
	\itshape 
}
\def\th@definition{%
	\thm@notefont{}
	\normalfont 
}
\crefname{equation}{}{equations}
\crefname{chapter}{Appendix}{chapters}
\crefname{item}{}{items}
\crefname{enumi}{}{}
\newtheorem{theorem}{Theorem}[section]
\newtheorem{corollary}[theorem]{Corollary}
\newtheorem{proposition}[theorem]{Proposition}
\newtheorem{example}[theorem]{Example}
\newtheorem{fact}[theorem]{Fact}
\newtheorem{remark}[theorem]{Remark}
\providecommand{\gr}{\operatorname{gra}}
\providecommand{\gra}{\operatorname{gra}}
\providecommand{\spn}{\operatorname{span}}
\newcommand{\ift}{\ensuremath{\text{if }}}
\newcommand{\cl}[1]{\overline{#1}}
\newcommand{\crefpart}[2]{
  \hyperref[#2]{\namecref{#1}~\labelcref*{#1}~\ref*{#2}}
}
\author{
Heinz H.\ Bauschke\thanks{
Mathematics, University
of British Columbia,
Kelowna, B.C.\ V1V~1V7, Canada. E-mail:
\texttt{heinz.bauschke@ubc.ca}.},~
Walaa M.\ Moursi\thanks{
Department of Combinatorics and Optimization, 
University of Waterloo,
Waterloo, Ontario N2L~3G1, Canada.
  and
  Mansoura University, Faculty of Science,
  Mathematics Department,
  Mansoura 35516, Egypt.
 E-mail: \texttt{walaa.moursi@uwaterloo.ca}.}
,~and~
Shambhavi Singh\thanks{
Department of Combinatorics and Optimization, 
University of Waterloo,
Waterloo, Ontario N2L~3G1, Canada.
\texttt{shambhavi.singh@uwaterloo.ca}.}
}
\title{\textsf{
Eckstein-Ferris-Pennanen-Robinson duality revisited:\\
paramonotonicity, total Fenchel-Rockafellar duality,\\
and the Chambolle-Pock operator
}
}
\begin{document}

\maketitle

\begin{abstract}
Finding zeros of the sum of two maximally monotone operators involving a continuous linear operator is a central problem in optimization and monotone operator theory. We revisit the duality framework proposed by Eckstein, Ferris, 
Pennanen, and Robinson from a quarter of a century ago. 
Paramonotonicity is identified as a broad condition ensuring that 
saddle points coincide with the closed convex rectangle formed by the 
primal and dual solutions. Additionally, we characterize total duality in 
the subdifferential setting and derive projection formulas for sets 
that arise in the analysis of the Chambolle-Pock algorithm within 
the recent framework developed by Bredies, Chenchene, Lorenz, and Naldi. 
\end{abstract}
{ 
\small
\noindent
{\bfseries 2020 Mathematics Subject Classification:}
{Primary 
49N15, 
90C46, 
47H05; 
Secondary 
47H09; 
47N10, 
49M27, 
49M29, 
65K05, 
90C25. 
}

\noindent {\bfseries Keywords:}
Attouch-Th\'era duality, 
Chambolle-Pock operator,
Eckstein-Ferris-Pennanen-Robinson duality,
Fenchel-Rockafellar duality,
maximally monotone operator, 
paramonotone operator, 
Primal-Dual Hybrid Gradient (PDHG) operator, 
saddle point, 
sum problem, 
subdifferential operator, 
total duality. 

}

\section{Introduction}

Throughout this paper, we assume that 
\begin{equation}
\text{$X$ and $Y$ are real Hilbert spaces, and $L\colon X\to Y$ is continuous and linear.}
\end{equation}
We also assume that 
\begin{equation}
\text{$A$ and $B$ are maximally monotone on $X$ and $Y$, respectively.}
\end{equation}
(See, e.g., \cite{BC2017} for more on maximally monotone operators.)
A central problem in optimization and variational analysis is to 
\begin{equation}\label{e:p}
\text{Find }x\in X\text{ such that }0\in Ax+L^*BLx.
\end{equation}
We emphasize that we do not assume that $A+L^*BL$ or $L^*BL$ is maximally monotone. 
Following Eckstein and Ferris \cite{EF}, 
we encode the \emph{(primal) problem} \cref{e:p} by the triple 
\begin{equation}
(A,L,B). 
\end{equation}
The \emph{dual problem} is given by 
\begin{equation}
\label{e:d} 
\text{Find $y\in Y$ such that }
0\in B^{-1}y -LA^{-1}(-L^*y),
\end{equation}
which is now represented by the triple 
$(B^{-1},-L^*,A^{-1})$. 
This gives rise to the \emph{duality} operation
\begin{equation}
(A,L,B)^* := (B^{-1},-L^*,A^{-1})
\end{equation}
which features the pleasant \emph{biduality} (see \cite[Section~2.1]{EF})
\begin{equation}
(A,L,B)^{**} = (B^{-1},-L^*,A^{-1})^* = 
\big((A^{-1})^{-1},-(-L^*)^*,(B^{-1})^{-1}  \big)
=(A,L,B). 
\end{equation}
The set of solutions to the primal problem is given by 
\begin{subequations}
\begin{align}
\sol(A,L,B) &:= \menge{x\in X}{\text{$x$ solves the problem 
given by $(A,L,B)$}}\\
&= 
\menge{x\in X}{0\in Ax+L^*BLx}. 
\end{align}
\end{subequations}
For ease of notation, 
we abbreviate the set of the primal and the dual solutions by 
\begin{equation}
\label{e:Z}
Z := \sol(A,L,B) = \menge{x\in X}{0\in Ax+L^*BLx}
\end{equation}
and 
\begin{equation}
\label{e:K}
K := \sol(A,L,B)^* = \menge{y\in Y}{0\in B^{-1}y-LA^{-1}(-L^*y)}, 
\end{equation}
respectively. 
Closely related to these notions is the set of 
\emph{saddle points} 
{
(see \cref{r:saddle} below for the motivation of this name) 
or \emph{primal-dual solutions} 
} 
for $(A,L,B)$, given by 
\begin{equation}
\label{e:saddle}
\bKT := \menge{(x,y)\in X\times Y}{-L^*y\in Ax\land Lx\in B^{-1}y}. 
\end{equation}
The inclusion 
\begin{equation}
\label{e:SZK}
\bKT \subseteq Z\times K
\end{equation}
is always true  (see \cref{p:241120d} below). 
However, equality in \cref{e:SZK} may fail as 
Eckstein and Ferris (\cite[page~69]{EF}) observed: 

\begin{example}
\label{ex:skewbad}
Suppose that $X=Y=\RR^2$, $L=\Id$,
\begin{equation}
A = \begin{bmatrix} 
0 & -1 \\\
1 & 0 
\end{bmatrix},
\;\;\text{and}\;\;
B = \begin{bmatrix} 
0 & 1 \\\
-1 & 0 
\end{bmatrix}.
\end{equation}
Then $A^{-1}=B=-A$, $B^{-1}=A=-B$, and
we have the strict inclusion 
\begin{equation}
\bKT = \gra(-A) 
\subsetneqq \RR^2\times\RR^2 = Z\times K.
\end{equation}
\end{example}

Our goal in this paper is to carefully study the relationship 
between the primal problem \cref{e:p}, the dual problem \cref{e:d}, 
the corresponding solution sets \cref{e:Z}, \cref{e:K}, and the
set of saddle points \cref{e:saddle}. 
Our main results can be summarized as follows:

\begin{itemize}
\item We show that paramonotonicity of $A$ and $B$ is a quite general sufficient condition to guarantee that $\bS = Z\times K$ (\cref{c:para1}). 
\item We observe that --- in the subdifferential case --- the nonemptiness of $Z$
characterizes total Fenchel-Rockafellar duality (\cref{t:td}). 
\item We obtain formulas for projections onto sets arising in the 
Bredies-Chenchene-Lorenz-Naldi framework for studying the 
Chambolle-Pock algorithm (\cref{s:projsols}). 
\end{itemize}

Our work is complementary to that by 
Eckstein and Ferris \cite{EF} who focused on complementarity problems, 
by Pennanen \cite{Pennanen} who considered an even more general framework and derived criteria for maximal  monotonicity as did  Robinson \cite{Robinson}. 
We mention that our work extends previous work on 
\emph{Attouch-Th\'era duality}, 
i.e., when $Y=X$ and $L=\Id$  (see \cite{AT} and \cite{BBHM}). 
{
We also would like to mention 
references that were brought to our attention after 
the initial version of this manuscript 
was posted on arXiv: indeed,  
Dr.~Panos Patrinos pointed out related work 
on the Bredies-Chenchene-Lorenz-Naldi framework in 
\cite{LP}, \cite{EPLP}, and \cite{ELP};
and 
Dr.~Patrick Louis Combettes stressed the relevance 
of his work \cite{BAC} and \cite{theusual} for 
saddle point and product space formulations. 
}

The remainder of the paper is organized as follows.
In \cref{s:traverse}, we lay the foundation by analyzing the mappings 
which traverse between $Z$ and $K$, the sets of primal and dual solutions. 
The graph of these mappings is intimately connected to the set of saddle points as is observed in \cref{s:saddle}. 
The case when we have common zeros is characterized in 
\cref{s:commonzeros}. 
In \cref{s:para}, we reveal the set of saddle 
points $\bKT$ to be  
the convex ``rectangle'' $Z\times K$ 
whenever $A$ and $B$ are paramonotone. 
In \cref{s:subdiffs}, we show that the nonemptiness of $Z$ is 
equivalent to total Fenchel-Rockafellar duality. 
The Chambolle-Pock operator is revisited in 
the Bredies-Chenchene-Lorenz-Naldi framework in \cref{s:CP};
moreover, projection formula on associated sets are presented in 
\cref{s:projsols}. 
In \cref{s:normalcones}, we focus on the case when $A$ and $B$ are
normal cone operators which is useful for studying feasibility problem.
In the final \cref{s:prod}, we discuss a product space set up 
to deal with certain problems that may feature more than two operators. 

The notation we employ is fairly standard and largely follows 
\cite{BC2017}.

\section{Traversing between primal and dual solutions}

\label{s:traverse}

The following notation, which induces a way to traverse between 
primal and dual solutions, will be used extensively throughout the paper: 
Set 
\begin{equation}\label{eq:Kx}
(\forall x\in X)\quad 
K_x:=(-L^{-*}Ax)\cap (BLx)\subseteq Y
\end{equation}
and
\begin{equation}\label{eq:Zy}
(\forall y\in Y)\quad 
Z_y:=(L^{-1}B^{-1}y)\cap A^{-1}(-L^*y) \subseteq X, 
\end{equation}
{
where $L^{-*} = (L^{*})^{-1}$ and the inverse is 
in the sense of set-valued analysis.
}

\begin{proposition}
\label{p:241120b}
Let $(x,y)\in X\times Y$. 
Then the following hold:
\begin{enumerate}
\item 
\label{p:241120b1}
$y\in K_x$ 
$\siff$ $x\in Z_y$. 
\item 
\label{p:241120b1.5}
$L^*(K_x) = (-Ax)\cap L^*BLx$ and 
$L(Z_y)= (B^{-1}y)\cap LA^{-1}(-L^*y)$.
\item 
\label{p:241120b2}
$K_x\neq\varnothing$
$\siff$ $x\in Z$, and
$Z_y\neq\varnothing$
$\siff$ $y\in K$.
\item 
\label{p:241120b3}
$K_x\subseteq K$ and 
$Z_y \subseteq Z$. 
\item 
\label{p:241120b4}
$K_x$ and $Z_y$ are closed and convex. 
\end{enumerate}
\end{proposition}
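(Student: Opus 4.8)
The plan is to reduce every assertion to the single pair of inclusions
\[
-L^*y\in Ax \quad\text{and}\quad Lx\in B^{-1}y,
\]
which is precisely the saddle-point relation defining $\bKT$ in \cref{e:saddle}. First I would unwind the two defining intersections: $y\in-L^{-*}Ax$ means $L^*(-y)\in Ax$, i.e.\ $-L^*y\in Ax$, while $y\in BLx$ means $y\in B(Lx)$, i.e.\ $Lx\in B^{-1}y$; dually, $x\in L^{-1}B^{-1}y$ reads $Lx\in B^{-1}y$ and $x\in A^{-1}(-L^*y)$ reads $-L^*y\in Ax$. Thus both $y\in K_x$ and $x\in Z_y$ are equivalent to the displayed pair, which proves \cref{p:241120b1} outright and incidentally exhibits $\bKT$ as the graph of the assignment $x\mapsto K_x$.

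For \cref{p:241120b1.5} I would argue by double inclusion. One inclusion is immediate from the decoding above: if $y\in K_x$, then $L^*y\in-Ax$ and $L^*y\in L^*(BLx)$, so $L^*(K_x)\subseteq(-Ax)\cap L^*BLx$. For the reverse inclusion, take $u\in(-Ax)\cap L^*BLx$ and write $u=L^*w$ with $w\in BLx$; then $-L^*w=-u\in Ax$, whence $w\in K_x$ and $u=L^*w\in L^*(K_x)$. The identity for $L(Z_y)$ is proved symmetrically: from $v\in(B^{-1}y)\cap LA^{-1}(-L^*y)$ one extracts $x'\in A^{-1}(-L^*y)$ with $Lx'=v$, and since $Lx'=v\in B^{-1}y$ one obtains $x'\in Z_y$, hence $v\in L(Z_y)$.

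The remaining parts then follow cheaply. Because $L^*(\emp)=\emp$, nonemptiness of $K_x$ is equivalent to nonemptiness of $L^*(K_x)=(-Ax)\cap L^*BLx$, and this intersection is nonempty exactly when $0\in Ax+L^*BLx$, i.e.\ when $x\in Z$; the mirror computation handles $Z_y$, giving \cref{p:241120b2}. For \cref{p:241120b3}, if $y\in K_x$ then \cref{p:241120b1} yields $x\in Z_y$, so $Z_y\neq\emp$ and the dual half of \cref{p:241120b2} forces $y\in K$; the inclusion $Z_y\subseteq Z$ is obtained in the same way. Finally, \cref{p:241120b4} rests on two standard facts from \cite{BC2017}: the values of a maximally monotone operator are closed and convex, and the preimage of a closed convex set under a continuous linear map is closed and convex. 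Hence $BLx$ and $-L^{-*}Ax=-(L^*)^{-1}(Ax)$ are closed and convex, so their intersection $K_x$ is as well; the sets $B^{-1}y$, $L^{-1}(B^{-1}y)$, and $A^{-1}(-L^*y)$ are treated identically for $Z_y$.

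The argument is in essence careful bookkeeping with the set-valued inverses $L^{-*}$, $L^{-1}$, and $A^{-1}$, so I anticipate no substantial obstacle. The one step that demands attention is the reverse inclusion in \cref{p:241120b1.5}: one must use that an element of $L^*BLx$ really has the form $L^*w$ with $w\in BLx$, and then check that this particular $w$ lies in $K_x$, not merely in $BLx$. The symmetric verification for $Z_y$ calls for the analogous care with the preimage under $L$ and the value of $A^{-1}$.
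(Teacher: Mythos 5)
Your proof is correct, and for parts (i), (ii), and (v) it coincides with the paper's argument: the same chain of equivalences for (i), the same double inclusion with the extraction of $w\in BLx$ for (ii), and the same reasoning for (v) (values of maximally monotone operators are closed and convex, and preimages under continuous linear maps preserve this). Where you genuinely diverge is in (iii) and (iv). The paper proves both by direct element-chasing: from $k\in K_x$ it adds the memberships $-L^*k\in Ax$ and $L^*k\in L^*BLx$ to conclude $x\in Z$, separately adds $Lx\in B^{-1}k$ and $-Lx\in -LA^{-1}(-L^*k)$ to conclude $k\in K$, and then handles the converse of (iii) by extracting a suitable $y\in BLx$ from $0\in Ax+L^*BLx$. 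You instead derive (iii) from (ii) --- observing that $K_x\neq\emp$ iff $L^*(K_x)=(-Ax)\cap L^*BLx\neq\emp$, which is visibly equivalent to $0\in Ax+L^*BLx$ --- and then obtain (iv) for free from (i) together with the dual half of (iii). Your organization is more economical, reusing the earlier parts instead of repeating computations, and it makes transparent that (iii) and (iv) are dual to each other via (i); the paper's version is more self-contained, with each implication verified directly from the definitions. Both routes are sound, and your handling of the one delicate step you flagged (choosing $w\in BLx$ with $u=L^*w$ and checking $w\in K_x$ rather than merely $w\in BLx$) matches the paper's treatment exactly.
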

\begin{proof}
We have the equivalences
\begin{subequations}
\begin{align}
y\in K_x
&\siff
y\in -L^{-*}Ax \land y \in BLx\\
&\siff
-L^*y \in Ax \land Lx \in B^{-1}y\\
&\siff
x \in A^{-1}(-L^*y) \land x \in L^{-1}B^{-1}y\\
&\siff
x\in Z_y,
\end{align}
\end{subequations}
and this proves \cref{p:241120b1}.

We now prove \cref{p:241120b1.5}:
First, suppose that $v\in K_x$, i.e., 
$v\in-L^{-*}Ax$ and $v\in BLx$.
It follows that $L^*v\in -Ax$ and $L^*v\in L^*BLx$, i.e., 
$L^*v \in (-Ax)\cap L^*BLx$. 
We've verified that $L^*(K_x)\subseteq (-Ax)\cap L^*BLx$. 
Conversely, suppose that $u\in (-Ax)\cap L^*BLx$. 
Then there exists $v\in BLx$ such that $u=L^*v$. 
Hence $-L^*v\in Ax$ and so $v\in -L^{-*}x$. 
Thus $v\in K_x$ and so $u = L^*v \in L^*(K_x)$. 
Altogether, we have shown that $L^*(K_x) = (-Ax)\cap L^*BLx$. 
The proof of the identity 
$L(Z_y)= (B^{-1}y)\cap LA^{-1}(-L^*y)$ is similar.

Turning to \cref{p:241120b2} and \cref{p:241120b3}, 
suppose that 
$K_x\neq\varnothing$, and let $k\in K_x$. 
It follows that 
\begin{equation}
k\in -L^{-*}Ax \land k \in BLx.
\end{equation}
First, this implies that 
$-L^*k \in Ax \land L^*k\in L^*BLx$, which upon adding, yields 
$x \in Z$. 
Second, this also implies 
$x \in A^{-1}(-L^*k) \land Lx \in B^{-1}k$. 
Hence $-Lx \in -LA^{-1}(-L^*k) \land Lx \in B^{-1}k$. 
Upon adding, we learn that $k\in K$. 
We have shown that $K_x \subseteq K$ and this is also trivially true 
if $K_x=\varnothing$. 

Now assume $x\in Z$. 
Then there exists $y\in BLx$ such that 
$-L^*y\in Ax$, i.e., $y\in -L^{-*}Ax$.
It follows that $y \in K_x$ and so $K_x\neq\varnothing$. 

The remaining statements in \cref{p:241120b2} and \cref{p:241120b3}
are proved similarly.

Because $A$ and $B$ are maximally monotone, 
the sets 
$Ax$ and $B(Lx)$ are convex and closed. 
The linearity and continuity of $-L^*$ then 
yields the convexity and closedness of $-L^{-*}Ax$.
Upon intersecting, we obtain  convexity and closedness $K_x$. 
The set $Z_k$ is treated similarly. 
\end{proof}

We now associate with \cref{eq:Kx} and \cref{eq:Zy} the following 
set-valued operators:
\begin{equation}
\bK\colon X\To Y\colon x\mapsto K_x
\;\;\text{and}\;\;
\bZ\colon Y\To X\colon y\mapsto Z_y.
\end{equation}

\begin{corollary}
\label{c:241120c}
We have 
\begin{equation}
\text{$\dom \bK = Z$ and $\ran \bK = K = \bigcup_{z\in Z}K_z$}
\end{equation}
as well as 
\begin{equation}
\text{
$\dom \bZ = K$ and $\ran \bZ = Z = \bigcup_{k\in K}Z_k$.
}
\end{equation}
\end{corollary}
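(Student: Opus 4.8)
The plan is to unwind the definitions of the domain and range of the set-valued operators $\bK$ and $\bZ$ and then quote \cref{p:241120b} directly; no genuinely new work is required beyond bookkeeping. For the domain identities, recall that $\dom\bK=\menge{x\in X}{K_x\neq\varnothing}$ by definition, so the equivalence $K_x\neq\varnothing\siff x\in Z$ from \cref{p:241120b2} yields $\dom\bK=Z$ at once; the companion equivalence $Z_y\neq\varnothing\siff y\in K$ gives $\dom\bZ=K$ in the same way.

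For the range identities I would prove the two inclusions separately. Since $\ran\bK=\bigcup_{x\in X}K_x$ and $K_x=\varnothing$ whenever $x\notin Z$ (again \cref{p:241120b2}), the union collapses to $\bigcup_{z\in Z}K_z$, which accounts for the middle equality. The inclusion $\ran\bK\subseteq K$ is then immediate from $K_x\subseteq K$ in \cref{p:241120b3}. For the reverse inclusion, I would take $y\in K$, obtain some $x\in Z_y$ via \cref{p:241120b2}, and use the traversal equivalence $x\in Z_y\siff y\in K_x$ from \cref{p:241120b1} to conclude $y\in K_x\subseteq\ran\bK$. The statements for $\bZ$, namely $\ran\bZ=Z=\bigcup_{k\in K}Z_k$, follow by the symmetric argument with the roles of $\bK$ and $\bZ$ interchanged.

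The proof carries essentially no obstacle, since all the substance is already contained in \cref{p:241120b}. The only place meriting a moment's care is the reverse range inclusion, where mere nonemptiness of $Z_y$ is not enough: one must invoke the traversal equivalence \cref{p:241120b1} to manufacture an explicit $x$ with $y\in K_x$. Everything else is a direct translation of the proposition into the language of domains and ranges.
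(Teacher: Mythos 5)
Your proposal is correct and follows essentially the same route as the paper's proof: domains via \cref{p:241120b}\cref{p:241120b2}, the inclusion $\ran\bK=\bigcup_{z\in Z}K_z\subseteq K$ via \cref{p:241120b}\cref{p:241120b3}, and the reverse inclusion by picking $x\in Z_y\neq\varnothing$ and invoking the traversal equivalence \cref{p:241120b}\cref{p:241120b1}. Your remark that nonemptiness alone does not suffice for the reverse inclusion---one must actually pass through \cref{p:241120b}\cref{p:241120b1}---matches exactly the step the paper takes.
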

\begin{proof}
The domain statements follow from \cref{p:241120b}\cref{p:241120b2}.
This and \cref{p:241120b}\cref{p:241120b3} 
imply $\ran\bK = \bigcup_{z\in Z}K_z\subseteq K$
and $\ran\bZ = \bigcup_{k\in K}Z_k\subseteq Z$. 
Now let $k\in K= \dom \bZ$. 
Then $Z_k\neq\varnothing$. 
Let $z\in Z_k$. 
By \cref{p:241120b}\cref{p:241120b1}, 
$k \in K_z \subseteq \ran \bK$.
Hence 
$K\subseteq\ran \bK$. 
Altogether, we deduce that 
$\ran \bK = K$. The range of $\bZ$ is treated similarly. 
\end{proof}

The next result states that the operators $L\bZ$ and $L^*\bK$ are skew. 

\begin{proposition}\label{p:241120a}
Suppose that $z_i\in Z$, and $k_i\in K_{z_i}$, for each $i\in\{0,1\}$. 
Then 
\begin{equation}
\label{e:241125c}
\pair{Lz_0-Lz_1}{k_0-k_1}
= \pair{z_0-z_1}{L^*k_0-L^*k_1}=0.
\end{equation}
\end{proposition}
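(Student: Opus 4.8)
The plan is to squeeze both inner products in \cref{e:241125c} between two opposite inequalities, one supplied by the monotonicity of $A$ and the other by the monotonicity of $B$. First I would dispose of the left-hand equality, which is purely formal: by bilinearity of the inner product together with the defining property of the adjoint,
\begin{equation}
\pair{Lz_0-Lz_1}{k_0-k_1} = \pair{L(z_0-z_1)}{k_0-k_1} = \pair{z_0-z_1}{L^*(k_0-k_1)} = \pair{z_0-z_1}{L^*k_0-L^*k_1}.
\end{equation}
Hence it suffices to show that this common value is $0$.

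Next I would unpack the hypothesis $k_i\in K_{z_i}$. By the definition \cref{eq:Kx}, $k_i\in(-L^{-*}Az_i)\cap(BLz_i)$, which is equivalent to the two inclusions $-L^*k_i\in Az_i$ and $k_i\in BLz_i$ for each $i\in\{0,1\}$; equivalently, $(z_i,-L^*k_i)\in\gra A$ and $(Lz_i,k_i)\in\gra B$. Now I would invoke monotonicity twice. Since $(z_0,-L^*k_0)$ and $(z_1,-L^*k_1)$ both lie in $\gra A$ and $A$ is monotone,
\begin{equation}
0\le \pair{z_0-z_1}{(-L^*k_0)-(-L^*k_1)} = -\pair{z_0-z_1}{L^*k_0-L^*k_1},
\end{equation}
so that $\pair{z_0-z_1}{L^*k_0-L^*k_1}\le 0$. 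Since $(Lz_0,k_0)$ and $(Lz_1,k_1)$ both lie in $\gra B$ and $B$ is monotone,
\begin{equation}
0\le \pair{Lz_0-Lz_1}{k_0-k_1}.
\end{equation}
Combining these two inequalities with the identity from the first paragraph gives
\begin{equation}
0\le \pair{Lz_0-Lz_1}{k_0-k_1} = \pair{z_0-z_1}{L^*k_0-L^*k_1}\le 0,
\end{equation}
forcing the common value to equal $0$, which is exactly \cref{e:241125c}.

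There is no genuine obstacle here: the result is a direct two-line application of monotonicity. The only point demanding care is bookkeeping of the minus sign coming from the $-L^{-*}$ in the definition of $K_x$: it is precisely this sign that flips the inequality from $A$ into the \emph{opposite} direction from the inequality from $B$, so that the two bounds meet and the squeeze closes. (This also makes transparent why $L\bZ$ and $L^*\bK$ are skew, as announced before the statement.)
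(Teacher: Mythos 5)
Your proof is correct and follows essentially the same route as the paper: both deduce the sign-flipped inequality from the monotonicity of $A$ applied to $(z_i,-L^*k_i)\in\gra A$, the opposite inequality from the monotonicity of $B$ (the paper phrases this as monotonicity of $L^*BL$ applied to $L^*k_i\in L^*BLz_i$, which is the same computation with the adjoint identity folded in), and then squeeze. No gaps; the sign bookkeeping you highlight is exactly the crux the paper relies on as well.
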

\begin{proof}
The left equality in \cref{e:241125c} is trivial. 
Now, the assumption implies that each $k_i \in (-L^{-*}Az_i)\cap (BLz_i)$.
Thus each $-L^*k_i \in Az_i$ and $L^*k_i\in L^*BLz_i$.
Because $A$ and $L^*BL$ are monotone, we deduce that 
$0 \leq \scal{z_0-z_1}{(-L^*k_0)-(-L^*k_1)}$ 
and $0 \leq \scal{z_0-z_1}{L^*k_0-L^*k_1}$.
Altogther, 
$0=\scal{z_0-z_1}{L^*k_0-L^*k_1}$ which is the right equality in
\cref{e:241125c}.
\end{proof}

\section{Saddle points}

\label{s:saddle}

\begin{fact} 
\label{f:saddle}
The set of saddle points $\bKT$ is convex and closed. 
\end{fact}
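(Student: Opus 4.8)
The plan is to express $\bKT$ as the zero set of a single maximally monotone operator on the product space $X\times Y$, and then to invoke the classical fact that such zero sets are closed and convex. To this end, I would introduce
\begin{equation}
\bM\colon X\times Y\To X\times Y\colon(x,y)\mapsto(Ax+L^*y)\times(B^{-1}y-Lx),
\end{equation}
and first verify that $\bKT=\zer\bM$. This is immediate upon unwinding definitions: $(0,0)\in\bM(x,y)$ holds exactly when $0\in Ax+L^*y$ and $0\in B^{-1}y-Lx$, i.e., when $-L^*y\in Ax$ and $Lx\in B^{-1}y$, which is precisely the condition defining $\bKT$ in \cref{e:saddle}.

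The core of the argument is to show that $\bM$ is maximally monotone, which I would do by splitting $\bM=\bT+\bN$, where
\begin{equation}
\bT\colon(x,y)\mapsto Ax\times B^{-1}y
\quad\text{and}\quad
\bN\colon(x,y)\mapsto(L^*y,-Lx).
\end{equation}
Since $B$ is maximally monotone, so is $B^{-1}$, and hence the product operator $\bT$ is maximally monotone. The operator $\bN$ is bounded, linear, and skew, since $\scal{(x,y)}{\bN(x,y)}=\scal{x}{L^*y}+\scal{y}{-Lx}=\scal{Lx}{y}-\scal{Lx}{y}=0$; in particular $\bN$ is monotone, everywhere defined, and continuous, hence maximally monotone with $\dom\bN=X\times Y$. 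The sum theorem then yields that $\bM=\bT+\bN$ is maximally monotone (see, e.g., \cite{BC2017}). Since the set of zeros of a maximally monotone operator is closed and convex (again see \cite{BC2017}), we conclude that $\bKT=\zer\bM$ is closed and convex.

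I expect the only genuine obstacle to be the maximal monotonicity of $\bM$. A direct attempt to prove convexity from the two defining inclusions $-L^*y\in Ax$ and $Lx\in B^{-1}y$ separately runs into trouble, because the inclusion $-L^*y\in Ax$ couples $x$ and $y$ through the bilinear term $\scal{x}{L^*y}=-\scal{Lx}{y}$, so the associated sublevel sets are not half-spaces and convexity is not transparent. Packaging both inclusions into $\bM$ is exactly what makes these skew cross-terms cancel, so that the half-space characterization of $\bM^{-1}(0,0)$ afforded by maximal monotonicity is affine in $(x,y)$, whence convexity. The single hypothesis to check for the sum theorem is a constraint qualification, and it holds trivially here because $\dom\bN=X\times Y$. (Closedness alone is even more elementary: it already follows from the closedness of $\gra A$ and $\gra B$ together with the continuity of the linear maps $(x,y)\mapsto(x,-L^*y)$ and $(x,y)\mapsto(Lx,y)$.)
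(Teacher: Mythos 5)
Your proof is correct and follows essentially the same route as the paper: the paper (in \cref{r:saddle}) identifies $\bKT$ as the zero set of exactly the operator you call $\bM$, namely $(x,y)\mapsto(Ax\times B^{-1}y)+(L^*y,-Lx)$, whose maximal monotonicity it cites to \cite[Proposition~2]{EF} and \cite[Proposition~26.33]{BC2017}, while you supply the same argument in full (product operator plus bounded linear skew part, then the sum theorem with full-domain constraint qualification). The only cosmetic remark is that your symbols $\bM$, $\bT$, $\bN$ collide with notation the paper reserves for other objects in \cref{s:CP}, so they should be renamed if spliced into the text.
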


\begin{remark}
\label{r:saddle}
\cref{f:saddle} can be proved by observing that the points in $\bKT$ are actually 
the zeros of the 
the maximally monotone operator on $X\times Y$, given by 
$(x,y)\mapsto (Ax\times B^{-1}y) + (L^*y,-Lx)$; 
see \cite[Proposition~2]{EF} (and also \cite[Proposition~26.33]{BC2017}). 
When $A=\partial f$ and $B = \partial g$, for proper 
lower semicontinuous convex functions $f$ and $g$ on 
$X$ and $Y$, respectively, then $\bKT$ is precisely the set of saddle point of the function
\begin{equation}
(x,y)\mapsto 
f(x)-g^*(y)+\scal{x}{L^*y}, 
\end{equation}
which is convex in $x$ and concave in $y$. 
This motivates the term ``saddle points'' for \cref{e:saddle}. 
One may also find the terminology ``Kuhn-Tucker vectors'' for $\bKT$ 
in the literature (see, e.g., \cite{Rockafellar}). 
\end{remark}

\begin{corollary}
\label{p:241120d}
The set 
\begin{equation}
\label{e:241125a}
\text{$\bKT = \gra \bK = \gra \bZ^{-1}$ is a closed convex subset of $Z\times K$.
}
\end{equation}
Moreover, the sets of primal and dual solutions 
\begin{equation}
\label{e:241125b}
\text{
$Z$ and $K$ are convex. 
}
\end{equation}
\end{corollary}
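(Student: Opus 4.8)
The plan is to establish the two displayed equalities first and then read off the remaining assertions. For the identity $\bKT = \gra\bK$, I would just unfold both definitions: by \cref{eq:Kx}, $y\in K_x$ means $y\in(-L^{-*}Ax)\cap(BLx)$, and since $y\in -L^{-*}Ax \siff -L^*y\in Ax$ and $y\in BLx \siff Lx\in B^{-1}y$, this is exactly the defining condition $-L^*y\in Ax \land Lx\in B^{-1}y$ of $\bKT$ in \cref{e:saddle}. Hence $(x,y)\in\bKT \siff y\in K_x \siff (x,y)\in\gra\bK$. The second identity $\gra\bK = \gra\bZ^{-1}$ is then immediate from \cref{p:241120b}\cref{p:241120b1}, which gives $y\in K_x \siff x\in Z_y$, i.e.\ $(x,y)\in\gra\bK \siff (y,x)\in\gra\bZ \siff (x,y)\in\gra\bZ^{-1}$.

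For the closedness and convexity of $\bKT$ there is nothing new to do: these are precisely the content of \cref{f:saddle}, which I would simply invoke. To obtain the inclusion $\bKT\subseteq Z\times K$, I would take $(x,y)\in\bKT = \gra\bK$, so that $y\in K_x$; then $K_x\neq\emp$ forces $x\in Z$ by \cref{p:241120b}\cref{p:241120b2}, while $y\in K_x\subseteq K$ by \cref{p:241120b}\cref{p:241120b3}. Thus $(x,y)\in Z\times K$, which completes \cref{e:241125a}.

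Finally, the convexity of $Z$ and $K$ in \cref{e:241125b} follows by projecting the convex set $\bKT$ onto its two factors. Let $P_X\colon X\times Y\to X$ and $P_Y\colon X\times Y\to Y$ denote the continuous linear coordinate projections. Using $\gra\bK = \bKT$ together with \cref{c:241120c}, I would identify $Z = \dom\bK = P_X(\bKT)$ and $K = \ran\bK = P_Y(\bKT)$. Since the image of a convex set under a linear map is convex and $\bKT$ is convex by \cref{f:saddle}, both $Z$ and $K$ are convex.

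The only genuinely substantive ingredient is the convexity and closedness of $\bKT$ supplied by \cref{f:saddle}; once that is granted, every remaining step is either a definitional unfolding or the elementary fact that linear images of convex sets are convex, so I do not anticipate a real obstacle. The one point to get right is the bookkeeping identifying $Z$ and $K$ as the coordinate projections of $\bKT$ by means of \cref{c:241120c}.
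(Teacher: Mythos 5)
Your proposal is correct and follows essentially the same route as the paper: unfold the definitions to get $\bKT=\gra\bK$, apply \cref{p:241120b}\cref{p:241120b1} to identify $\bK=\bZ^{-1}$, invoke \cref{f:saddle} for closedness and convexity, and obtain the convexity of $Z$ and $K$ as the coordinate projections ($\dom\bK$ and $\ran\bK$) of the convex set $\bKT$. The only cosmetic difference is that you justify the inclusion $\bKT\subseteq Z\times K$ directly via \cref{p:241120b}\cref{p:241120b2}\&\cref{p:241120b3}, while the paper leans on the domain/range identities of \cref{c:241120c}; these are the same facts.
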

\begin{proof}
Taking $(x,y)\in X\times Y$, we get the equivalences
\begin{subequations}
\begin{align}
(x,y)\in \bKT
&\siff
y\in -L^{-*}Ax \land y \in B(Lx)\\
&\siff 
y \in K_x\\
&\siff (x,y)\in\gra \bK, 
\end{align}
\end{subequations}
which yields $\bKT = \gra  \bK$. 
On the other hand, \cref{p:241120b}\cref{p:241120b1} yields 
$\bK = \bZ^{-1}$.
Combining with \cref{f:saddle}, we obtain \cref{e:241125a}.

Because $\bKT$ is convex, so are the restrictions to 
the first and second components, i.e., $Z=\dom\bK$ and $K=\ran\bK$.
This proves \cref{e:241125b} and we are done. 
\end{proof}

\begin{remark} 
\label{r:241211a}
The fact that $Z$ and $K$ are convex is somewhere surprising 
because the operator sums occurring in their definition are 
not assumed to be maximally monotone.
\end{remark}

\section{Common zeros}

\label{s:commonzeros}

The next result characterizes when we have common zeroes for the problem 
\cref{e:p}:

\begin{proposition}[common zeros]\label{prop:commonzeros}
$\zer A\cap \zer L^*BL\neq \varnothing \siff K\cap \ker L^* \neq \varnothing$. 
\end{proposition}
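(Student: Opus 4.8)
The plan is to prove both implications by exhibiting an explicit witness on each side, using the traversal sets $K_x$ and $Z_y$ from \cref{eq:Kx} and \cref{eq:Zy} together with the membership facts in \cref{p:241120b}. The conceptual bridge is that the condition $0\in L^*BLx$, being a statement about the set $L^*(BLx)$, secretly encodes the existence of a point $y\in BLx$ with $L^*y=0$; conversely, once $L^*y=0$ the two defining conditions of $K_x$ and $Z_y$ collapse onto the \emph{zero} conditions $0\in Ax$ and $0\in L^*BLx$.

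For the forward implication, suppose $x\in\zer A\cap\zer L^*BL$. From $0\in L^*BLx$ I would select $y\in BLx$ with $L^*y=0$. I then check that $y\in K_x=(-L^{-*}Ax)\cap(BLx)$: the second membership $y\in BLx$ is how $y$ was chosen, while the first membership $-L^*y\in Ax$ reduces, thanks to $L^*y=0$, to $0\in Ax$, which holds. Hence $K_x\neq\varnothing$, so by \cref{p:241120b}\cref{p:241120b3} we get $y\in K_x\subseteq K$; and $L^*y=0$ gives $y\in\ker L^*$. Thus $y\in K\cap\ker L^*$.

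For the reverse implication, suppose $y\in K\cap\ker L^*$. Since $y\in K=\dom\bZ$ by \cref{c:241120c}, the set $Z_y=(L^{-1}B^{-1}y)\cap A^{-1}(-L^*y)$ is nonempty; pick $x\in Z_y$. Using $L^*y=0$, the membership $x\in A^{-1}(-L^*y)$ becomes $0\in Ax$, so $x\in\zer A$. The membership $x\in L^{-1}B^{-1}y$ gives $Lx\in B^{-1}y$, equivalently $y\in BLx$, whence $0=L^*y\in L^*BLx$ and therefore $x\in\zer L^*BL$. Hence $x\in\zer A\cap\zer L^*BL$.

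The only real content is the first step of each direction --- translating $0\in L^*BLx$ into a genuine element $y\in BLx$ annihilated by $L^*$, and recognizing that a kernel element $y$ makes the condition ``$-L^*y\in Ax$'' degenerate to the zero condition. Everything else is bookkeeping with the already-established properties of $K_x$, $Z_y$, and their domains. I do not expect any analytic obstacle here: no closure, maximality, or range constraint qualification is needed, only the set-valued inverse manipulations packaged in \cref{p:241120b} and \cref{c:241120c}.
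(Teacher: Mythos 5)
Your proof is correct and follows essentially the same route as the paper: both directions hinge on extracting from $0\in L^*BLx$ a witness $y\in BLx\cap\ker L^*$, recognizing that $L^*y=0$ collapses the membership $-L^*y\in Ax$ to $0\in Ax$, and then invoking \cref{p:241120b} and \cref{c:241120c} to pass between $K_x$, $Z_y$, $K$, and $Z$. The paper's backward direction uses $K=\bigcup_{z\in Z}K_z$ where you use $K=\dom\bZ$ and pick $x\in Z_y$, but these are the same fact viewed through \cref{p:241120b}\cref{p:241120b1}, so the arguments are interchangeable.
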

\begin{proof} 
``$\Rightarrow$'':
Indeed, using \cref{p:241120b}\cref{p:241120b3}, 
\begin{subequations}
\begin{align}
\zer A\cap \zer L^*BL\neq \varnothing
&\siff 
(\exists z\in X)\;\; z\in\zer A\cap \zer L^*BL\\
&\siff 
(\exists z\in X)\;\; 
0\in Az\land 0\in L^*BLz\\
&\Rightarrow
(\exists z\in X)\;\; 
 L^{-*}(0)\subseteq L^{-*}Az 
\land (\exists y\in BLz)\;\; L^*y=0\\
&\siff 
(\exists z\in X)\;\; 
\pm\ker L^*\subseteq L^{-*}Az \land 
(\exists y\in Y)\;\; y\in BLz\cap \ker L^*\\
&\Rightarrow
(\exists y\in Y)\;\; y\in \ker L^* \cap BLz\cap (-L^{-*}Az)\\
&\siff 
(\exists y\in Y)\;\; y\in \ker L^* \cap K_z \subseteq \ker L^* \cap K. 
\end{align}
\end{subequations}

``$\Leftarrow$'': 
Conversely, assume that $K\cap \ker L^*\neq \varnothing$. 
Let $k\in K\cap \ker L^*$. 
By \cref{c:241120c}, $K=\bigcup_{z\in Z} K_z$. 
Hence there exists $z\in Z$ 
such that $k\in K_z\cap \ker L^*$. 
Thus
\begin{equation}
k \in (-L^{-*}Az) \cap (BLz) \cap \ker L^*. 
\end{equation}
Hence 
$-L^*k \in Az \land k \in BLz \land L^*k = 0$. 
This implies $0 = -0 = -L^*k \in Az \land 0 = L^*k \in L^*BLz$, 
i.e., $z\in \zer A \cap \zer L^*BL$. 
\end{proof}

\begin{proposition}[$0$ as a dual solution]
\label{prop:commonzeros2}
We have:  
$\zer L^{-*}A\cap \zer BL\neq \varnothing \siff 0\in K$. 
\end{proposition}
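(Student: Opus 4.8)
The plan is to recognize both zero sets on the left-hand side as the two ingredients of the traversal set $Z_0=\bZ(0)$, and then to invoke the nonemptiness criterion already established in \cref{p:241120b}\cref{p:241120b2}, namely $Z_y\neq\varnothing\siff y\in K$. Read at $y=0$, this gives $Z_0\neq\varnothing\siff 0\in K$ for free, so everything hinges on the single identification $\zer L^{-*}A\cap\zer BL=Z_0$.

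To establish that identification, I would unfold the two zero sets separately. Since $L^*$ is linear we have $L^*0=0$, so $0\in L^{-*}(Ax)$ holds exactly when $0=L^*0\in Ax$; hence $\zer L^{-*}A=A^{-1}(0)$. Likewise $0\in B(Lx)$ holds exactly when $Lx\in B^{-1}(0)$, so $\zer BL=L^{-1}B^{-1}(0)$. Intersecting these and recalling the definition \cref{eq:Zy} of $Z_y$ at $y=0$ (where $-L^*0=0$ gives $A^{-1}(-L^*0)=A^{-1}(0)$) yields $\zer L^{-*}A\cap\zer BL=A^{-1}(0)\cap L^{-1}B^{-1}(0)=Z_0$, which is precisely the step that lets the previously proved machinery take over.

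With this identity in hand, the proposition follows immediately by reading off \cref{p:241120b}\cref{p:241120b2} at $y=0$. The one point demanding care, and the only place where the set-valued nature of $L^{-*}$ could cause trouble, is the first unfolding: the condition $0\in L^{-*}(Ax)$ is a preimage condition, $\{v\in Y\mid L^*v\in Ax\}\ni 0$, which collapses to $0\in Ax$ precisely because $L^*$ annihilates $0$, and must not be confused with asking that $0\in L^{-*}(0)$. Should one prefer to bypass the traversal operators, the same equivalence can be obtained by a direct two-way definition chase in the style of \cref{prop:commonzeros}, expanding $0\in K$ via \cref{e:K} into the existence of some $w$ with $0\in Aw$ and $Lw$ satisfying $0\in B(Lw)$; I expect the $Z_0$ route to be the shorter and more transparent of the two.
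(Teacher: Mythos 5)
Your proof is correct and is essentially the paper's argument viewed from the primal side: the paper rewrites the left-hand side as $(\exists z\in X)\; 0\in K_z$ and then applies $K=\bigcup_{z\in Z}K_z$ from \cref{c:241120c}, while you rewrite it as $Z_0\neq\varnothing$ and apply \cref{p:241120b}\cref{p:241120b2}; the two readings are interchanged by \cref{p:241120b}\cref{p:241120b1}. Your preliminary identifications $\zer L^{-*}A=A^{-1}(0)$ and $\zer BL=L^{-1}B^{-1}(0)$, hence $\zer L^{-*}A\cap\zer BL=Z_0$, are exactly right (and they neatly sidestep the small sign observation $0\in L^{-*}Az\siff 0\in-L^{-*}Az$ that the paper's $K_z$-based route implicitly uses), so the argument is complete as stated.
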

\begin{proof}
Indeed, using \cref{c:241120c}, we have 
\begin{subequations}
\begin{align}
\zer L^{-*}A\cap \zer BL\neq \varnothing
&\siff (\exists z\in X)\;\; 0\in (-L^{-*}Az) \cap  (BLz)\\
&\siff (\exists z\in X)\;\; 0\in K_z\\
&\siff 0\in K = \bigcup_{z\in Z} K_z, 
\end{align}
\end{subequations}
as claimed. 
\end{proof}

The previous two results generalize \cite[Proposition~2.10]{BBHM};
moreover, these generalization are different from each other 
as we now show:

\begin{example}  \label{ex:common0vs0inK}
Suppose that $Y=X$ and that $U$ is a proper closed linear subspace of $X$.
Further suppose that $A=P_U$ and $L=P_U=L^*$,
let $u^\perp\in U^\perp\smallsetminus\{0\}$, 
and suppose that 
$B\colon y\mapsto u^\perp$. 
Because $\ker L^*=U^\perp$, we have 
$(\forall x\in X)$ 
$x\in Z$
$\siff$
$0\in Ax+L^*BLx = P_Ux$
$\siff$
$x\in U^\perp$; thus, 
\begin{equation}
Z = U^\perp.
\end{equation}
It thus follows with \cref{c:241120c} that 
\begin{align}
K&=\bigcup_{z\in Z}K_z=\bigcup_{z\in U^\perp} (-L^{-*}Az)\cap (BLz)
= \bigcup_{z\in U^\perp} (-U^\perp\cap \{u^\perp\})=\{u^\perp\}.
\end{align}
Hence $0\notin K$, and, as predicted by 
\cref{prop:commonzeros2}, 
$\zer L^{-*}\cap\zer BL=\varnothing$. 
On the other hand, 
$K\cap \ker L^* = \{u^\perp\}\cap U^\perp = \{u^\perp\} \neq\varnothing$
and so 
\cref{prop:commonzeros} guarantees that 
$\zer A \cap \zer L^*BL\neq\varnothing$.
Indeed, $\zer A\cap \zer L^*BL = \zer P_U \cap \zer P_Uu^\perp = U^\perp$. 
\end{example}

\section{Paramonotonicity helps!}

\label{s:para}

Recall that $A:X\To X$ is \emph{paramonotone} if it is monotone and 
\begin{equation*}
\left. 
\begin{array}{c}
(x_0,u_0)\in\gr A\\
(x_1,u_1)\in\gr A\\
\scal{x_0-x_1}{u_0-u_1}=0
\end{array}
\right\}
\Rightarrow 
(x_0,u_1)\in\gr A \text{ and }(x_1,u_0)\in\gr A.
\end{equation*}
The term ``paramonotone'' was introduced in 
\cite{Paramonotone} for functions on $\RR^n$, 
and thoroughly studied in \cite{IusPara}. 
The class of paramonotone operators is large: it includes subdifferential 
operators, strictly monotone operators, displacement mappings, and others 
(see also \cite{BC2017} and \cite{BWY2014}).

\begin{theorem}\label{thm:paraZK}
Suppose that 
$A$ and $B$ are paramonotone. 
Then $(\forall z\in Z)$ $K_z=K$ and $(\forall k\in K)$ $Z_k=Z$.
\end{theorem}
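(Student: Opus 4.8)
The plan is to derive the second identity from the first by duality, so that the substance of the proof is the claim $K_z=K$ for every $z\in Z$. Since \cref{p:241120b}\cref{p:241120b3} already yields $K_z\subseteq K$, I only need the reverse inclusion $K\subseteq K_z$. Accordingly I would fix an arbitrary $z\in Z$ and an arbitrary $k\in K$ and try to verify the two defining memberships $-L^*k\in Az$ and $k\in BLz$, which together say precisely that $k\in K_z=(-L^{-*}Az)\cap(BLz)$.

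To create pairs on which paramonotonicity can act, I would exploit \cref{c:241120c}: from $k\in K=\bigcup_{z'\in Z}K_{z'}$ there is some $z'\in Z$ with $k\in K_{z'}$, and from $z\in Z$ together with \cref{p:241120b}\cref{p:241120b2} the fiber $K_z$ is nonempty, so I may also pick an auxiliary $\ell\in K_z$. Unravelling the definition of the fibers then records four incidences: $(z,-L^*\ell)$ and $(z',-L^*k)$ lie in $\gra A$, while $(Lz,\ell)$ and $(Lz',k)$ lie in $\gra B$.

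The crux is to run these pairs through \cref{p:241120a} with $z_0=z$, $k_0=\ell$, $z_1=z'$, $k_1=k$, which produces the two orthogonality relations $\scal{z-z'}{L^*\ell-L^*k}=0$ and $\scal{Lz-Lz'}{\ell-k}=0$. These are exactly the degeneracy hypotheses in the definition of paramonotonicity. Indeed, the first relation gives $\scal{z-z'}{(-L^*\ell)-(-L^*k)}=0$, so paramonotonicity of $A$ applied to $(z,-L^*\ell),(z',-L^*k)\in\gra A$ swaps second coordinates and delivers $(z,-L^*k)\in\gra A$, i.e.\ $-L^*k\in Az$; the second relation lets paramonotonicity of $B$ applied to $(Lz,\ell),(Lz',k)\in\gra B$ deliver $(Lz,k)\in\gra B$, i.e.\ $k\in BLz$. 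Together these give $k\in K_z$, hence $K\subseteq K_z$ and $K_z=K$.

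For the companion statement $Z_k=Z$ I would appeal to the duality $(A,L,B)^*=(B^{-1},-L^*,A^{-1})$ and the biduality noted in the introduction. Since an operator is paramonotone exactly when its inverse is, the dual triple again consists of paramonotone operators; and a short computation using $(-L^*)^*=-L$ identifies the fiber of the dual problem at $y$ with $Z_y$ and identifies the dual's set of dual solutions with $Z$. Applying the already-proved first assertion to the dual triple then yields $Z_k=Z$ for every $k\in K$. I expect the only real obstacle to be conceptual rather than computational: recognizing that \cref{p:241120a} hands over precisely the two zero inner products that paramonotonicity consumes, and remembering to manufacture the auxiliary element $\ell\in K_z$ so that there is a genuine partner in each graph against which to pivot.
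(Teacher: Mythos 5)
Your proof is correct and follows essentially the same route as the paper: both hinge on \cref{p:241120a} supplying the two vanishing inner products and on paramonotonicity of $A$ and $B$ swapping the graph pairs, with $K=\bigcup_{z\in Z}K_z$ from \cref{c:241120c} closing the argument. The only cosmetic differences are that the paper shows $K_{z_0}=K_{z_1}$ for arbitrary $z_0,z_1\in Z$ and then takes the union, whereas you fix $z$ and $k$ and introduce the auxiliary $\ell\in K_z$; and for the second assertion the paper says ``proved similarly'' using that inverses of paramonotone operators are paramonotone, which your explicit appeal to the duality $(A,L,B)^*=(B^{-1},-L^*,A^{-1})$ merely makes formal.
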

\begin{proof}
Take $z_0,z_1$ in $Z$, and let each $k_i \in K_{z_i}$ 
By \cref{p:241120a}, 
\begin{equation}
\scal{Lz_0-Lz_1}{k_0-k_1} = 0 = \scal{z_0-z_1}{(-L^*k_0)-(-L^*k_1)}.
\end{equation}
On the other hand, the definition of $K_{z_i}$ yields
$(z_i,-L^*k_i)\in\gr A$ and $(Lz_i,k_i)\in\gr B$. 
Altogether, in view of the paramonotonicity of $A$ and $B$, 
we deduce that 
$(z_0,-L^*k_1)\in\gr A$, 
$(z_1,-L^*k_0)\in\gr A$, 
$(Lz_0,k_1)\in\gr B$, and 
$(Lz_1,k_0)\in\gr B$. 
Hence $k_1\in K_{z_0}$ and $k_0\in K_{z_1}$.
It follows that $K_{z_0}=K_{z_1}$ and thus $(\forall z\in Z)$ $K_z=K$.

The remaining assertion is proved similarly, after noting that an operator
is paramonotone if and only if its inverse is likewise. 
\end{proof}

\begin{remark}
\label{r:pararecovery}
Suppose that $A$ and $B$ are paramonotone, and that 
$k\in K$ is an \emph{arbitrary} dual solution. 
Then 
\cref{thm:paraZK} guarantees that the \emph{entire set of primal solutions}
is recovered via
\begin{equation}
Z = Z_k = \big(L^{-1}B^{-1}k\big) \cap \big(A^{-1}(-L^*k)\big).
\end{equation}
Similarly, a single primal solution $z\in Z$ recovers all 
dual solutions via $K = K_z = (-L^{-*}Az)\cap (BLz)$. 
Without paramonotonicity, this fails even when $Y=X$ and $L=\Id$ --- 
combine \cref{ex:skewbad} with \cref{p:241120d}. 
\end{remark}

\begin{corollary}\label{c:para1}
Suppose that $A$ and $B$ are paramonotone. 
Then the following hold: 
\begin{enumerate}
\item\label{c:para11} The sets of primal and dual 
solutions, $Z$ and $K$, are convex and closed.
\item\label{c:para12} 
$\gr \bK = \bKT$ and $\gr \bZ$ are the rectangles $Z\times K$ and $K\times Z$, respectively.
\item\label{c:para13b} $\scal{\cspan(Z-Z)}{\cspan(L^*K-L^*K)}=0=\scal{\cspan(LZ-LZ)}{\cspan(K-K)}$.
\end{enumerate}
\end{corollary}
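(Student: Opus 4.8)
The plan is to run everything off \cref{thm:paraZK}, which under paramonotonicity collapses the fibres to the full solution sets ($K_z=K$ for every $z\in Z$ and $Z_k=Z$ for every $k\in K$), and to combine this with the structural facts already recorded in \cref{p:241120d} and the skewness identity in \cref{p:241120a}. With those three inputs the three assertions become almost mechanical, so the real content is just recognizing which prior result does each job.

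For \cref{c:para11}, convexity of $Z$ and $K$ is already furnished by \cref{p:241120d} and needs no paramonotonicity. For closedness I would first dispose of the degenerate case $Z=\varnothing$, which forces $K=\varnothing$ by \cref{c:241120c}, so both are vacuously closed. If $Z\neq\varnothing$ then $K\neq\varnothing$, and I would pick any $k\in K$ and use \cref{thm:paraZK} to write $Z=Z_k$; since $Z_k$ is closed by \cref{p:241120b}\cref{p:241120b4}, so is $Z$. Closedness of $K$ follows symmetrically from $K=K_z$ for any $z\in Z$. For \cref{c:para12} I would start from the identity $\gr\bK=\bKT$ recorded in \cref{p:241120d}; since $\dom\bK=Z$ by \cref{c:241120c} and $K_z=K$ for every $z\in Z$ by \cref{thm:paraZK}, the graph is exactly $\{(z,k):z\in Z,\ k\in K\}=Z\times K$. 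The companion statement $\gr\bZ=K\times Z$ follows the same way from $\dom\bZ=K$ and $Z_k=Z$.

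The heart of the matter is \cref{c:para13b}. Here I would feed \cref{thm:paraZK} into \cref{p:241120a}. That proposition supplies $\scal{z_0-z_1}{L^*k_0-L^*k_1}=0$ and $\scal{Lz_0-Lz_1}{k_0-k_1}=0$ \emph{only} for the constrained pairs $k_i\in K_{z_i}$; the decisive use of paramonotonicity is that \cref{thm:paraZK} gives $K_{z_i}=K$, so an \emph{arbitrary} $k_i\in K$ is automatically admissible. Consequently both bilinear forms vanish identically on $(Z-Z)\times(L^*K-L^*K)$ and on $(LZ-LZ)\times(K-K)$ respectively. To reach the closed spans I would then argue by bilinearity (vanishing on the generating difference sets forces vanishing on their algebraic spans) followed by joint continuity of the inner product (orthogonality persists under closure); the empty case again degenerates to orthogonality with $\{0\}$. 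I do not expect a genuine obstacle: all the substantive work has been front-loaded into \cref{thm:paraZK} and \cref{p:241120a}, and the only mildly delicate point is the density/continuity passage to $\cspan$, which is routine.
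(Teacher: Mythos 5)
Your proposal is correct and follows essentially the same route as the paper: each part is obtained by combining \cref{thm:paraZK} with, respectively, \cref{p:241120b}\cref{p:241120b4} (closedness/convexity of the fibres $Z_k$, $K_z$), \cref{p:241120d} (for the graph identity), and \cref{p:241120a} (for the orthogonality). The paper's proof is just a terser version of yours; your extra care with the empty case and the bilinearity/continuity passage to $\cspan$ fills in details the paper leaves implicit.
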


\begin{proof}
\cref{c:para11}: 
Combine  \cref{thm:paraZK} with \cref{p:241120b}. 
\cref{c:para12}: 
Combine \cref{thm:paraZK} 
with \cref{p:241120d}.
\cref{c:para13b}: 
Combining \cref{thm:paraZK} 
with \cref{p:241120a}, we see that 
$\scal{Z-Z}{L^*K-L^*K}=0=\scal{LZ-LZ}{K-K}$ which yields the result.
\end{proof}

\begin{remark}
As in \cref{r:241211a}, 
it is surprising that paramonotonicity guarantees that $Z$ and $K$ are 
closed and convex even though the operator sums occurring in their definition
are not required to be maximally monotone. 
{
Moreover, it would be interesting to know whether 
it is possible to characterize 
the rectangle property in 
\cref{c:para1}\cref{c:para12}. 
}
\end{remark}

We learn more about the solution sets under additional assumptions:

\begin{proposition}
\label{p:para2} 
Suppose that $A$ and $B$ are paramonotone.
Then the following hold: 
\begin{enumerate}
\item\label{p:para21} 
If $\cspan(L^*K-L^*K)=X$, then $Z$ is a singleton. 
\item\label{p:para22} 
If $\cspan(LZ-LZ)=Y$, then $K$ is a singleton.
\item\label{p:para23} 
If $\cspan(K-K)=Y$, then $Z-Z\subseteq \ker L$.
\item\label{p:para24} 
If $\cspan(Z-Z)=X$, then $K-K\subseteq \ker L^*$.
\end{enumerate}
\end{proposition}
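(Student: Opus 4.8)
The plan is to read off all four conclusions from the single orthogonality relation supplied by \cref{c:para1}\cref{c:para13b}, namely
\begin{equation*}
\scal{\cspan(Z-Z)}{\cspan(L^*K-L^*K)}=0=\scal{\cspan(LZ-LZ)}{\cspan(K-K)}.
\end{equation*}
The elementary principle I will invoke each time is that whenever two subspaces $U,V$ of a Hilbert space are mutually orthogonal and one of them fills the whole space, the other collapses to $\{0\}$; concretely, if $\scal{U}{V}=0$ and $V$ equals the ambient space, then $U\subseteq V^\perp=\{0\}$. Since the inner product is continuous and bilinear, the orthogonality between the generating sets already recorded in the proof of \cref{c:para1} passes to their closed spans, so I may apply this principle directly to the closed-span statement above.

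For \cref{p:para21}, I would feed the hypothesis $\cspan(L^*K-L^*K)=X$ into the left-hand orthogonality to obtain $\cspan(Z-Z)\subseteq X^\perp=\{0\}$, so that $Z-Z=\{0\}$ and $Z$ contains at most one point. To conclude that $Z$ is genuinely a singleton I would observe that the spanning hypothesis itself forbids $K=\varnothing$ once $X\neq\{0\}$ (an empty $K$ yields $L^*K-L^*K=\varnothing$ and hence $\cspan(\varnothing)=\{0\}\neq X$), and that $K\neq\varnothing$ forces $Z\neq\varnothing$ by \cref{c:241120c}. \cref{p:para22} is proved in exactly the same way, now using the right-hand orthogonality and the hypothesis $\cspan(LZ-LZ)=Y$ to collapse $\cspan(K-K)$ to $\{0\}$, together with the symmetric nonemptiness observation.

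For \cref{p:para23}, I would instead start from $\cspan(K-K)=Y$ and apply the same principle to the right-hand relation, obtaining $\cspan(LZ-LZ)\subseteq Y^\perp=\{0\}$, hence $LZ-LZ=\{0\}$; since linearity gives $LZ-LZ=L(Z-Z)$, this reads $L(Z-Z)=\{0\}$, i.e., $Z-Z\subseteq\ker L$. \cref{p:para24} follows symmetrically from the left-hand relation, using $L^*K-L^*K=L^*(K-K)$ to rewrite the conclusion as $K-K\subseteq\ker L^*$. The arguments are short; the only point requiring a little care is upgrading ``$Z-Z=\{0\}$'' to ``$Z$ is a singleton'' in \cref{p:para21} and \cref{p:para22}, which is why I isolate the nonemptiness observation there.
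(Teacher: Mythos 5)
Your proposal is correct and takes essentially the same route as the paper: both read all four items directly off the orthogonality relation in \cref{c:para1}\cref{c:para13b}, collapsing whichever closed span is orthogonal to the full ambient space down to $\{0\}$, and using $LZ-LZ=L(Z-Z)$ and $L^*K-L^*K=L^*(K-K)$ for \cref{p:para23} and \cref{p:para24}. The only difference is your explicit nonemptiness step (via \cref{c:241120c}) upgrading $Z-Z\subseteq\{0\}$ to ``$Z$ is a singleton'' in \cref{p:para21}--\cref{p:para22}, a point the paper glosses over with the word ``equivalently''; this extra care is sound and, if anything, slightly more rigorous than the published argument.
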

\begin{proof}
\cref{p:para21}: 
If $\cspan(L^*K-L^*K)=X$, 
then \cref{c:para1}\cref{c:para13b} yields $\cspan(Z-Z)=\{0\}$; 
equivalently, $Z$ is a singleton. 
\cref{p:para22}: 
The proof is similar to that of \cref{p:para21}. 
\cref{p:para23}: 
Suppose that $\cspan(K-K)=Y$. 
Then \cref{c:para1}\cref{c:para13b} yields $\{0\}=\cspan(LZ-LZ)$. 
It follows that 
$\{0\} = LZ-LZ = L(Z-Z)$; hence, $Z-Z\subseteq\ker L$. 
\cref{p:para24}: 
The proof is similar to that of \cref{p:para23}. 
\end{proof}

\begin{example}
\label{ex:241207a}
Suppose that $Y=X\neq\{0\}$, $A=0$, and $B=\Id$.
Clearly, $A$ and $B$ are paramonotone. 
Because $\ker(L^*L)=\ker L$,
we have 
\begin{equation}
\label{e:241207a1}
Z = \ker L = Z-Z.
\end{equation}
And because $(\ran L)^\perp = \ker L^*$ we deduce that 
\begin{equation}
\label{e:241207a2}
K = \{0\} = K-K. 
\end{equation}
We note the following:
\begin{enumerate}
\item If $L$ is injective, then $Z=\{0\}$, 
$\cspan(L^*K-L^*K)=\{0\}\neq X$, and 
$\cspan(LZ-LZ)=\{0\}\neq Y$; therefore,
the converse implications of 
\cref{p:para2}\cref{p:para21}\&\cref{p:para22} fail. 
\item 
The converse of \cref{p:para2}\cref{p:para23} fails. 
\item 
If $L\neq 0$, then $Z-Z=\ker L \subsetneqq X$ and 
so the converse of \cref{p:para2}\cref{p:para24} fails. 
\end{enumerate}
\end{example}

In the presence of a common-zero assumption, we can make further deductions:

\begin{theorem}
\label{t:241120e}
Suppose that $A$ and $B$ are paramonotone, and that 
$\zer L^{-*}A\cap\zer BL\neq\varnothing$. 
Then the following hold:
\begin{enumerate}
\item 
\label{t:241120e1}
$Z=(\zer A)\cap(L^{-1}\zer B)=(\zer A)\cap(\zer (BL))$ and $0\in K$.
\item 
\label{t:241120e2}
$\cspan L^* K\perp \cspan(Z-Z)$. 
\item 
\label{t:241120e3}
If $A$ is single-valued, then $K\subseteq \ker L^*$. 
\item 
\label{t:241120e4}
If $B$ is single-valued, then $K=\{0\}$.
\item 
\label{t:241120e5}
If $\inte Z\neq\varnothing$, then $K\subseteq \ker L^*$. 
\end{enumerate}
\end{theorem}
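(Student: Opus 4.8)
The whole argument rests on one reduction: by \cref{prop:commonzeros2} the hypothesis $\zer L^{-*}A\cap\zer BL\neq\varnothing$ is equivalent to $0\in K$, after which paramonotonicity lets \cref{thm:paraZK} collapse every fiber $Z_k$ and $K_z$ onto the full solution sets. For \cref{t:241120e1} I would record $0\in K$ and then apply \cref{thm:paraZK} with $k=0$ to get $Z=Z_0$. Evaluating the fiber \cref{eq:Zy} at $0$, using $L^*0=0$, $A^{-1}(0)=\zer A$, and $B^{-1}(0)=\zer B$, gives $Z=\zer A\cap L^{-1}\zer B$, and $L^{-1}\zer B=\zer(BL)$ is immediate; nonemptiness of $Z$, which I will need later, is automatic since the assumed common zero lies in it (equivalently, $Z_0\neq\varnothing$ by \cref{p:241120b}\cref{p:241120b2}).

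For \cref{t:241120e2} the key is that $0\in K$ upgrades the translation-invariant orthogonality already supplied by \cref{c:para1}\cref{c:para13b}. Since $0=L^*0\in L^*K$, we have $L^*K\subseteq L^*K-L^*K$, hence $\cspan L^*K\subseteq\cspan(L^*K-L^*K)$, and the latter is orthogonal to $\cspan(Z-Z)$; this yields $\cspan L^*K\perp\cspan(Z-Z)$.

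For \cref{t:241120e3} and \cref{t:241120e4} I would fix $z\in Z$ and use \cref{thm:paraZK} in the form $K=K_z=(-L^{-*}Az)\cap BLz$, so that every $k\in K$ satisfies $-L^*k\in Az$ and $k\in BLz$; by \cref{t:241120e1} also $0\in Az$ and $0\in B(Lz)$. If $A$ is single-valued then $Az=\{0\}$ forces $L^*k=0$, giving $K\subseteq\ker L^*$; if $B$ is single-valued then $B(Lz)=\{0\}$ forces $k=0$, which together with $0\in K$ gives $K=\{0\}$. For \cref{t:241120e5}, $\inte Z\neq\varnothing$ makes $Z-Z$ contain a neighborhood of the origin, so $\cspan(Z-Z)=X$, and \cref{t:241120e2} then forces $\cspan L^*K=\{0\}$, i.e.\ $K\subseteq\ker L^*$.

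None of the individual steps is computationally heavy once the reduction to $0\in K$ is in hand. The step I would be most careful with is \cref{t:241120e2}: one must notice that it is the membership $0\in L^*K$ itself, rather than the symmetric difference $L^*K-L^*K$ handled by \cref{c:para1}\cref{c:para13b}, that promotes the orthogonality to $\cspan L^*K$, and this strengthened statement is precisely what powers \cref{t:241120e5}.
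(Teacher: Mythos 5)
Your proposal is correct and follows essentially the same route as the paper: reduce the hypothesis to $0\in K$ via \cref{prop:commonzeros2}, collapse the fibers with \cref{thm:paraZK} to get \cref{t:241120e1}, \cref{t:241120e3}, and \cref{t:241120e4}, and use the orthogonality from \cref{c:para1}\cref{c:para13b} together with $K\subseteq K-K$ for \cref{t:241120e2} and \cref{t:241120e5}. The only cosmetic difference is that for \cref{t:241120e5} you invoke your strengthened \cref{t:241120e2} directly, whereas the paper routes through \cref{p:para2}\cref{p:para24}; both rest on the same underlying fact.
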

\begin{proof}
The common-zero assumption is equivalent to 
\begin{equation}
\label{e:241126a}
0\in K
\end{equation}
by \cref{prop:commonzeros2}. 

\cref{t:241120e1}:
In view of \cref{e:241126a}, we deduce from \cref{r:pararecovery} that  
$Z = (L^{-1}B^{-1}0) \cap (A^{-1}(-L^*0)) = \zer A \cap L^{-1}(\zer B) 
= \zer A \cap \zer(BL)$. 

\cref{t:241120e2}: 
By \cref{e:241126a}, 
$K = K-0 \subseteq K-K$.
Hence 
\cref{c:para1}\cref{c:para13b} yields the result. 

\cref{t:241120e3}: 
Assume that $A$ is single-valued. 
By \cref{t:241120e1}, $Z\subseteq\zer A$. 
Now let $z\in Z$. 
Then $Az=0$ and
\cref{thm:paraZK} yields 
\begin{equation}
K=K_z=(-L^{-*}Az)\cap (BLz)=(-L^{-*}0)\cap(BLz)=\ker L^*\cap (BLz) \subseteq \ker L^*. 
\end{equation}

\cref{t:241120e4}: 
Assume that $B$ is single-valued. 
From \cref{t:241120e1}, we have $Z\subseteq L^{-1}\zer B$, 
i.e., $LZ \subseteq\zer B$. 
Combining this with \cref{e:241126a} and
\cref{thm:paraZK}, we deduce that 
$(\forall z\in Z)$ 
\begin{equation}
\label{e:241120e}
\{0\}\subseteq K=K_z=(-L^{-*}Az)\cap (BLz)\subseteq(-L^{-*}Az)\cap(B\zer B)=
(-L^{-*}Az) \cap \{0\} \subseteq\{0\}. 
\end{equation}

\cref{t:241120e5}: 
Assume that $\inte Z\neq\varnothing$. 
Then $\spn(Z-Z) = X$. 
Thus \cref{p:para2}\cref{p:para24} yields 
$K-K \subseteq \ker L^*$.
On the other hand, 
$K = K-0 \subseteq K-K$ by \cref{e:241126a}.
Altogether, $K\subseteq \ker L^*$. 
\end{proof}

\section{The subdifferential case and Fenchel-Rockafellar duality}

\label{s:subdiffs}

Throughout this section, we assume that 
\begin{equation}
f\colon X\to\RX 
\;\; \text{and}\;\;
g\colon Y\to\RX 
\;\;\text{are convex, lower semicontinuous, and proper,}
\end{equation}
and that 
\begin{equation}
A = \partial f
\;\; \text{and}\;\;
B = \partial g. 
\end{equation}
The \emph{Fenchel-Rockafellar primal problem} asks to minimize
$f+g\circ L$. 
Standard subdifferential calculus and Fermat's rule yields the 
following result:
\begin{proposition}
\label{p:fgincl}
We have 
\begin{equation}
Z = \zer(\partial f + L^*\circ\partial g\circ L)
\subseteq 
\zer(\partial f+\partial(g\circ L))
\subseteq 
\zer\partial(f+g\circ L) = 
\argmin (f+g\circ L). 
\end{equation}
\end{proposition}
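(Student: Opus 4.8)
The plan is to read the statement as a chain consisting of one definitional identity, two subdifferential inclusions that hold unconditionally, and a closing application of Fermat's rule. The leftmost equality $Z = \zer(\partial f + L^*\circ\partial g\circ L)$ requires no work: substituting $A=\partial f$ and $B=\partial g$ into the defining formula \cref{e:Z} for $Z$ turns the condition $0\in Ax+L^*BLx$ into $0\in \partial f(x) + L^*(\partial g)(Lx)$, which is exactly membership in $\zer(\partial f + L^*\circ\partial g\circ L)$.

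For the first inclusion, I would invoke the always-valid half of the subdifferential chain rule: for every $x\in X$ one has $L^*(\partial g)(Lx)\subseteq \partial(g\circ L)(x)$, with no constraint qualification needed. So if $x\in Z$, choose $u\in\partial f(x)$ and $v\in L^*(\partial g)(Lx)$ with $u+v=0$; since $v\in\partial(g\circ L)(x)$, we obtain $0=u+v\in \partial f(x)+\partial(g\circ L)(x)$, i.e. $x\in\zer(\partial f+\partial(g\circ L))$.

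For the second inclusion, I would use the unconditional half of the Moreau--Rockafellar sum rule, $\partial f(x)+\partial(g\circ L)(x)\subseteq\partial(f+g\circ L)(x)$, valid for any two proper convex functions. Hence $0\in\partial f(x)+\partial(g\circ L)(x)$ forces $0\in\partial(f+g\circ L)(x)$, giving $x\in\zer\partial(f+g\circ L)$. The rightmost equality is then Fermat's rule: $0\in\partial h(x)\siff x\in\argmin h$ for the proper function $h=f+g\circ L$, which is immediate from the definition of the subdifferential.

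The computation is routine, and the only genuine point of care is orientation. Both the chain rule and the sum rule become equalities only under a qualification condition, whereas the inclusions needed here are precisely the ``easy'' directions that hold in full generality --- which is exactly why \cref{p:fgincl} asserts $\subseteq$ rather than $=$. So the sole (minor) obstacle is to cite the correct, unconditional halves of these two calculus rules and avoid inadvertently appealing to the qualified equalities.
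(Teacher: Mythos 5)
Your proposal is correct and matches the paper's intent exactly: the paper offers no written proof beyond the remark that ``standard subdifferential calculus and Fermat's rule'' yield the result, and the standard argument it has in mind is precisely your chain --- the definitional identification of $Z$, the unconditional inclusions $L^*(\partial g)(Lx)\subseteq \partial(g\circ L)(x)$ and $\partial f(x)+\partial(g\circ L)(x)\subseteq\partial(f+g\circ L)(x)$, and Fermat's rule. The only microscopic caveat (which the paper also glosses over) is that Fermat's rule as usually stated requires $f+g\circ L$ to be proper; this is harmless here, since when $\dom f\cap L^{-1}(\dom g)=\varnothing$ all the zero sets in the chain are empty anyway.
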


\begin{theorem}
\label{t:fg}
Suppose that $z\in Z$ and 
let $x\in \argmin(f+g\circ L)$.
Then $L^*(K_z)\subseteq L^*(K_x)$. 
\end{theorem}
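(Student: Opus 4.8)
The plan is to prove the inclusion elementwise, after rewriting both sides by means of \cref{p:241120b}\cref{p:241120b1.5}, which (with $A=\partial f$ and $B=\partial g$) gives $L^*(K_z)=(-\partial f(z))\cap L^*\partial g(Lz)$ and $L^*(K_x)=(-\partial f(x))\cap L^*\partial g(Lx)$. The first observation I would record is that $z$ and $x$ are \emph{both} minimizers of $f+g\circ L$: indeed $z\in Z\subseteq\argmin(f+g\circ L)$ by \cref{p:fgincl}, while $x\in\argmin(f+g\circ L)$ by hypothesis. Moreover $z\in Z$ forces $\partial f(z)\neq\varnothing$ and $\partial g(Lz)\neq\varnothing$ (otherwise the sum defining $Z$ would be empty at $z$), so all four numbers $f(z),g(Lz),f(x),g(Lx)$ are finite and the common minimum value $\mu:=f(z)+g(Lz)=f(x)+g(Lx)$ is finite.

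Next I would fix $u\in L^*(K_z)=(-\partial f(z))\cap L^*\partial g(Lz)$ and unpack it: $-u\in\partial f(z)$, and $u=L^*k$ for some $k\in\partial g(Lz)$. Writing the subgradient inequality for $f$ at $z$ (with subgradient $-u$) evaluated at $x$, together with the subgradient inequality for $g$ at $Lz$ (with subgradient $k$) evaluated at $Lx$, yields
\begin{equation*}
f(x)\geq f(z)+\scal{-u}{x-z}
\qquad\text{and}\qquad
g(Lx)\geq g(Lz)+\scal{k}{Lx-Lz}.
\end{equation*}
Since $\scal{k}{Lx-Lz}=\scal{L^*k}{x-z}=\scal{u}{x-z}$, adding the two inequalities cancels the cross terms and gives $f(x)+g(Lx)\geq f(z)+g(Lz)=\mu$. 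Because $x$ is also a minimizer, the left-hand side equals $\mu$, so \emph{both} displayed inequalities must hold with equality.

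The decisive step is then to transfer the two subgradients from $z$ to $x$. From the equality $f(x)=f(z)+\scal{-u}{x-z}$ and the subgradient inequality $f(w)\geq f(z)+\scal{-u}{w-z}$ (valid for all $w\in X$), I would substitute to obtain, for every $w$,
\begin{equation*}
f(w)\geq f(z)+\scal{-u}{w-z}=\big(f(z)+\scal{-u}{x-z}\big)+\scal{-u}{w-x}=f(x)+\scal{-u}{w-x},
\end{equation*}
which is precisely $-u\in\partial f(x)$. The identical shift applied to $g$ at $Lz$, using the equality $g(Lx)=g(Lz)+\scal{k}{Lx-Lz}$, gives $k\in\partial g(Lx)$. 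Hence $-u\in\partial f(x)$ and $u=L^*k\in L^*\partial g(Lx)$, so $u\in(-\partial f(x))\cap L^*\partial g(Lx)=L^*(K_x)$, which establishes $L^*(K_z)\subseteq L^*(K_x)$.

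The only nontrivial point is this last transfer, i.e.\ that equality in the subgradient inequality propagates the subgradient to the second point; I expect this to be the main obstacle, although it reduces to a clean one-line consequence of the subgradient inequality (equivalently, of the Fenchel--Young equality $f(x)+f^*(-u)=\scal{-u}{x}$, and likewise for $g$). Everything else is bookkeeping around \cref{p:241120b}\cref{p:241120b1.5}, the minimizer identity from \cref{p:fgincl}, and the finiteness of the four function values involved.
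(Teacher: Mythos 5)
Your proposal is correct and follows essentially the same route as the paper's proof: both fix $u\in L^*(K_z)=(-\partial f(z))\cap L^*\partial g(Lz)$ via \cref{p:241120b}\cref{p:241120b1.5}, use the two subgradient inequalities at $z$ and $Lz$ together with the fact that $z$ and $x$ are both minimizers (from \cref{p:fgincl}) to force equality in those inequalities, and then transfer the subgradients $-u$ and $k$ from $z$, $Lz$ to $x$, $Lx$. The only cosmetic differences are that you derive the equality by adding the two inequalities and invoking minimality of $x$ (the paper instead sandwiches $\scal{x-z}{u}$ between $f(z)-f(x)$ and $g(Lx)-g(Lz)$), and you perform the transfer by substituting into the subgradient inequality rather than via the Fenchel--Young equality, which you yourself note is equivalent.
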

\begin{proof}
Because of \cref{p:fgincl}, we have 
$f(z) + g(Lz) = f(x) + g(Lx) \in\RR$; hence, 
\begin{equation}
\label{e:241204a}
f(z)-f(x)=g(Lx)-g(Lz). 
\end{equation}
We also have $K_z\neq \varnothing$ and 
$\varnothing\neq L^*(K_z)= (-Az)\cap L^*BLz 
=(-\partial f(z))\cap L^*\partial g(Lz)$ by 
\cref{p:241120b}\cref{p:241120b2}\&\cref{p:241120b1.5}. 
Let $u\in (-\partial f(z))\cap L^*\partial g(Lz)$. 
Then there exists $v\in K_z$ such that 
$u = L^*v$. 
On the one hand, because $-u\in\partial f(z)$, we have 
$f(z)+\scal{x-z}{-u}\leq f(x)$ or 
\begin{equation}
\label{e:241204b}
f(z)-f(x)\leq \scal{x-z}{u}. 
\end{equation}
On the other hand, because $v\in K_z \subseteq \partial g(Lz)$, 
we have $g(Lz) + \scal{Lx-Lz}{v}\leq g(Lx)$ 
$\siff$ 
$g(Lz)-g(Lx) \leq -\scal{Lx-Lz}{v}=-\scal{x-z}{L^*v}=-\scal{x-z}{u}$ 
$\siff$ 
\begin{equation}
\label{e:241204c}
\scal{x-z}{u} \leq  g(Lx)-g(Lz). 
\end{equation}
Combining \cref{e:241204a}, \cref{e:241204b}, and \cref{e:241204c}, 
we deduce that 
\begin{equation}
\label{e:241204d}
f(z)-f(x)=\scal{x-z}{u} = \scal{Lx-Lz}{v}=g(Lx)-g(Lz). 
\end{equation}
Because $-u\in\partial f(z)$; equivalently, 
$\scal{z}{-u}= f(z)+f^*(-u)$, 
we learn from \cref{e:241204d} that 
$\scal{z}{-u}= (f(x)+\scal{x-z}{u})+f^*(-u)$. 
Thus $\scal{x}{-u} = f(x)+f^*(-u)$, i.e., 
\begin{equation}
\label{e:241204e}
-u \in \partial f(x). 
\end{equation}
And because $v\in\partial g(Lz)$, 
we have $\scal{Lz}{v} = g(Lz) + g^*(v)$ 
and we learn from \cref{e:241204d} that 
$\scal{Lz}{v} = (g(Lx)+\scal{Lz-Lx}{v}) + g^*(v)$. 
Thus $\scal{Lx}{v}=g(Lx) + g^*(v)$ and so 
$v \in \partial g(Lx)$ which further yields
\begin{equation}
\label{e:241204f}
u = L^*v  \in L^*\partial g(Lx). 
\end{equation}
Combining \cref{e:241204e}, \cref{e:241204f}, 
with \cref{p:241120b}\cref{p:241120b1.5}, 
we deduce that $u = L^*v \in L^*(K_x)$. 
\end{proof}

\begin{corollary}
\label{c:fg}
If $\zer(\partial f+L^*\circ \partial g\circ L)\neq\varnothing$, then 
\begin{equation}
\label{e:241204g}
Z = \zer(\partial f + L^*\circ\partial g\circ L)
= 
\zer(\partial f+\partial(g\circ L))
= 
\zer\partial(f+g\circ L) = 
\argmin (f+g\circ L). 
\end{equation}
\end{corollary}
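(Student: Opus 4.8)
The plan is to leverage the chain of inclusions already recorded in \cref{p:fgincl} and to reduce the entire claim to the single reverse inclusion $\argmin(f+g\circ L)\subseteq Z$. Once this inclusion is established, the chain of \cref{p:fgincl} closes into a loop $Z\subseteq\zer(\partial f+\partial(g\circ L))\subseteq\zer\partial(f+g\circ L)=\argmin(f+g\circ L)\subseteq Z$, forcing every inclusion there to be an equality, which is precisely \cref{e:241204g}. The hypothesis $Z\neq\varnothing$ will enter not in this collapsing step but in the proof of the reverse inclusion itself.

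To prove the reverse inclusion I would first use the hypothesis to fix a primal solution $z\in Z$; this is the decisive use of the assumption, since \cref{t:fg} requires an element of $Z$ as input, and without a nonempty $Z$ the conclusion can fail (the usual duality-gap phenomenon). Then, given an arbitrary minimizer $x\in\argmin(f+g\circ L)$, I would apply \cref{t:fg} with this $z$ and this $x$ to obtain the image inclusion $L^*(K_z)\subseteq L^*(K_x)$.

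The final step is a nonemptiness transfer through $L^*$. Because $z\in Z$, \cref{p:241120b}\cref{p:241120b2} yields $K_z\neq\varnothing$, whence $L^*(K_z)\neq\varnothing$, and the inclusion above forces $L^*(K_x)\neq\varnothing$, so that $K_x\neq\varnothing$ as well. Invoking \cref{p:241120b}\cref{p:241120b2} a second time, the nonemptiness of $K_x$ is precisely the membership $x\in Z$. Since $x$ was an arbitrary minimizer, this proves $\argmin(f+g\circ L)\subseteq Z$ and finishes the argument.

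I do not expect a genuine obstacle within the corollary itself: all of the analytic work has been front-loaded into \cref{t:fg}, whose proof carries out the Fenchel--Young bookkeeping that propagates a subgradient from $z$ to an arbitrary minimizer $x$. The only point deserving care is conceptual rather than computational, namely observing that applying the single-valued map $L^*$ neither creates nor destroys emptiness, so that the inclusion of $L^*$-images delivered by \cref{t:fg} can be read back, via \cref{p:241120b}\cref{p:241120b2}, as the inclusion of solution sets we are after.
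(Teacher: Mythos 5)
Your proposal is correct and follows essentially the same route as the paper's own proof: fix $z\in Z$, take an arbitrary $x\in\argmin(f+g\circ L)$, use \cref{t:fg} together with \cref{p:241120b}\cref{p:241120b2} to transfer nonemptiness from $K_z$ to $K_x$ (so $x\in Z$), and then collapse the chain of \cref{p:fgincl} into equalities. No gaps; the argument matches the paper step for step.
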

\begin{proof}
Let $z\in \zer(\partial f+L^*\circ \partial g\circ L) = Z$ and
let $x\in \argmin (f+g\circ L)$. 
Because $z\in Z$, 
\cref{p:241120b}\cref{p:241120b2} yields
$K_z\neq\varnothing$. 
In view of \cref{t:fg}, $L^*(K_x)\neq\varnothing$. 
Hence $K_x\neq\varnothing$ and so $x \in Z$. 
Therefore, 
\begin{equation}
\argmin (f+g\circ L) \subseteq Z.
\end{equation}
Finally, the conclusion now follows from \cref{p:fgincl}. 
\end{proof}

\begin{remark}
Consider \cref{c:fg}. 
If 
$Z=\varnothing$, then \cref{e:241204g} may fail 
even when $Y=X$ and $L=\Id$; for more on this, 
see \cite[Chapter~13]{BMBook}. 
\end{remark}

We now involve the \emph{Fenchel-Rockafellar dual problem} which asks to 
minimize $g^* + f^*\circ(-\Id)$. 
The corresponding \emph{optimal values} of 
the primal and dual problems are 
\begin{equation}
\mu := \inf_{x\in X}\big(f(x)+g(Lx) \big)
\;\;\text{and}\;\;
\mu^* := \inf_{y\in Y}\big(g^*(y)+f^*(-L^*y) \big), 
\end{equation}
respectively. 
The Fenchel-Young inequality quickly yields
\begin{equation}
\label{e:mumu*}
\mu\geq -\mu^*.
\end{equation}

\begin{theorem}[total duality]
\label{t:td}
We have the equivalence
\begin{equation}
\label{e:241204h}
Z\neq\varnothing
\;\;\siff\;\;
\mu^*=-\mu\in \RR\; \text{and both infima defining $\mu,\mu^*$ are attained,}
\end{equation}
in which case 
$Z=\argmin(f+g\circ L)$. 
\end{theorem}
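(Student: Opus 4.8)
The plan is to establish both implications through the Fenchel--Young inequality together with weak duality \cref{e:mumu*}, reusing the mechanism already seen in the proof of \cref{t:fg}, and then to read off the final identity from \cref{c:fg}.

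For the forward implication, I would start from an arbitrary $z\in Z$. By definition of $Z$ there is some $v\in\partial g(Lz)$ with $-L^*v\in\partial f(z)$. These two subgradient inclusions are exactly the equality cases of Fenchel--Young, so $f(z)+f^*(-L^*v)=\scal{z}{-L^*v}$ and $g(Lz)+g^*(v)=\scal{Lz}{v}=\scal{z}{L^*v}$. Adding these cancels the pairing and yields $f(z)+g(Lz)+f^*(-L^*v)+g^*(v)=0$. Since $z\in\dom\partial f$ and $Lz\in\dom\partial g$, the values $f(z)$ and $g(Lz)$ are real, and the chain $\mu\le f(z)+g(Lz)=-(g^*(v)+f^*(-L^*v))\le-\mu^*$ combined with weak duality $\mu\ge-\mu^*$ forces $\mu=-\mu^*=f(z)+g(Lz)\in\RR$. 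Then $z$ attains $\mu$ and $v$ attains $\mu^*$, so both infima are attained.

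For the reverse implication, suppose $\mu^*=-\mu\in\RR$ with both infima attained; let $x$ attain $\mu$ and $y$ attain $\mu^*$. Then $f(x)+g(Lx)+g^*(y)+f^*(-L^*y)=\mu+\mu^*=0$. On the other hand, Fenchel--Young gives $f(x)+f^*(-L^*y)\ge\scal{x}{-L^*y}=-\scal{Lx}{y}$ and $g(Lx)+g^*(y)\ge\scal{Lx}{y}$; the nonnegative gaps of these two inequalities sum to exactly the quantity just shown to vanish, so both must hold with equality. Hence $-L^*y\in\partial f(x)=Ax$ and $y\in\partial g(Lx)$, i.e.\ $L^*y\in L^*BLx$; adding the inclusions $-L^*y\in Ax$ and $L^*y\in L^*BLx$ gives $0\in Ax+L^*BLx$, so $x\in Z$ and $Z\neq\varnothing$. (Equivalently, $(x,y)$ satisfies the defining conditions of \cref{e:saddle}, so $x\in Z$ via \cref{e:SZK}.)

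Finally, since $Z\neq\varnothing$ in either equivalent situation, \cref{c:fg} delivers $Z=\argmin(f+g\circ L)$, which is the concluding assertion. I expect the argument to be essentially routine; the only points requiring care are the bookkeeping of finiteness of the function values (secured by membership in the relevant subdifferential domains) and the correct handling of the sign in $\scal{x}{-L^*y}=-\scal{Lx}{y}$, so that the two Fenchel--Young inequalities add up to precisely the quantity forced to vanish. I do not anticipate a genuine obstacle beyond this.
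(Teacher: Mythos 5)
Your proof is correct, but it takes a more self-contained route than the paper. The paper's proof is essentially a double citation: for ``$\Rightarrow$'' it picks $k\in K_z$, notes $(z,k)\in\bKT$ via \cref{p:241120d}, and then invokes \cite[Theorem~19.1]{BC2017} to pass from the saddle-point inclusions to attainment and zero duality gap; for ``$\Leftarrow$'' it invokes the same external theorem in the opposite direction to get $(x,y)\in\bKT$, and then uses \cref{p:241120d} and \cref{c:241120c} to land in $Z$. You instead unpack what that cited theorem does: in the forward direction you extract the dual certificate $v\in\partial g(Lz)$ with $-L^*v\in\partial f(z)$ directly from $0\in\partial f(z)+L^*\partial g(Lz)$, use the equality case of Fenchel--Young twice, and squeeze against weak duality \cref{e:mumu*}; in the reverse direction you run the ``two nonnegative gaps summing to zero'' argument to recover the subgradient inclusions and hence $x\in Z$. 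Both proofs finish identically by quoting \cref{c:fg}. What your version buys is independence from \cite[Theorem~19.1]{BC2017} (and, in the forward direction, from the paper's $K_z$/$\bKT$ machinery), at the cost of some finiteness bookkeeping; note that in the reverse direction the finiteness of all four terms $f(x)$, $g(Lx)$, $g^*(y)$, $f^*(-L^*y)$ comes not from subdifferential domains (as you suggest in your closing remark) but from the attainment of the finite values $\mu$ and $\mu^*$ together with the fact that proper functions and conjugates of proper functions never take the value $-\infty$ --- a point worth stating explicitly, since the gap-splitting step silently uses it.
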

\begin{proof}
Either side of \cref{e:241204h} implies 
that $\dom g\cap L(\dom f)\neq\varnothing$.

``$\Rightarrow$'':
Suppose that $z\in Z$. 
Let $k\in K_z$. 
Then $(z,k)\in\bKT$ by \cref{p:241120d} and so 
$-L^*k\in \partial f(z)$ and $Lz \in \partial g^*(k)$. 
It thus follows from \cite[Theorem~19.1]{BC2017} that 
$z$ solves the primal problem, $k$ solves the dual problem, 
and $\mu^*=-\mu\in\RR$. 

``$\Leftarrow$'': 
Suppose 
$(x,y)\in X\times Y$ satisfies
$f(x)+g(Lx)=\mu=-\mu^*=-g^*(y)-f^*(-L^*y)$. 
By \cite[Theorem~19.1]{BC2017}, $(x,y)\in\bKT$. 
Hence \cref{p:241120d} and \cref{c:241120c} yield $x\in Z$ (and $y\in K$). 

Finally, the ``in which case'' statement is a consequence of \cref{c:fg}. 
\end{proof}

\begin{remark}
\cref{t:td} can be viewed as a variant of 
\cite[Corollary~5.3]{LFLL} which states that 
if $x$ is a primal solution and $\partial(f+g\circ L)(x) = \partial 
f(x) + L^*\partial g(Lx)$, then \emph{total duality} holds, 
i.e., $\mu^*=-\mu$ and both the primal and the dual Fenchel-Rockafellar 
problems have solutions. 
We refer the reader to the paper \cite{LFLL} by Li, Fang, L\'opez, and L\'opez for a
comprehensive study of total duality. 
\end{remark}

In contrast, it is possible that there is no duality gap ($\mu^*=-\mu$) but neither 
the primal nor the dual Fenchel-Rockafellar problem has a solution:

\begin{example}
Suppose that $X=Y=\RR^2$, $L=\Id$, and $f$ and $g$ are given by 
\begin{equation}
f(x_1,x_2) = \exp(x_1)+\exp^*(x_2)
\;\;\text{and}\;\;
g(x_1,x_2) = \exp(x_1)+\exp^*(-x_2).
\end{equation}
Then $\mu^* = -\mu = 0$ but neither Fenchel-Rockafellar problem has 
a solution.
\end{example}
\begin{proof}
Recall that 
\begin{equation}
\exp^*(\xi) = 
\begin{cases}
\pinf, &\text{if $\xi<0$;}\\
0, &\text{if $\xi=0$;}\\
\xi\ln(\xi)-\xi, &\text{if $\xi>0$.}
\end{cases}
\end{equation}
It follows that
\begin{align}
\mu &= 
\inf_{x_1\in\RR}(\exp(x_1)+\exp(x_1)) + 0 = 0,
\end{align}
but $\mu$ is not attained. 
Because $f^*(y_1,y_2)= \exp^*(y_1)+\exp(y_2) = f(y_2,y_1)$ and 
$g^*(y_1,y_2) = \exp^*(y_1)+\exp(-y_2) = g(-y_2,-y_1)$, 
it follows that 
The Fenchel-Rockafellar dual problem thus asks to minimize 
$(y_1,y_2)\mapsto g(-y_2,-y_1)+f(-y_2,-y_1)$; however,
again this problem has optimal value $\mu^*=0$ yet no solution. 
\end{proof}

We conclude this section with an excursion to the 
unconstrained LASSO problem. 

\begin{example}[LASSO]
Let $b\in\RR^m$, $\lambda>0$, and 
suppose that $X=\RR^n$, $Y=\RR^m$, and 
$L\in \RR^{m\times n}$.
Furthermore, 
we suppose that $f(x) = \lambda\|x\|_1 -\scal{x}{L^*b}$ 
and $g(y) = \thalb\|y\|^2 + \thalb\|b\|^2$.
Thus our maximally monotone operators are given by 
\begin{equation}
Ax = \partial f(x) = \lambda\partial \|x\|_1 - L^*b
\;\;\text{and}\;\;
By = \partial g(y)= \nabla g(y) = \Id.
\end{equation}
The value of the primal Fenchel-Rockafellar problem is 
\begin{equation}
\mu = \min_{x\in X} \big(f(x)+g(Lx)\big) = 
\min_{x\in X} \big(\thalb\|Lx-b\|^2 + \lambda\|x\|_1\big),
\end{equation}
which is the famous unconstrained LASSO problem \cite{Tib}. 
Note that we wrote ``$\min$'', not ``$\inf$'', which we justify now: 
indeed, 
the primal objective function $x\mapsto f(x)+g(Lx)$ is the sum of the 
nonnegative function $x\mapsto \thalb\|Lx-b\|^2$ and 
the coercive continuous function $x\mapsto\lambda\|x\|_1$, which 
guarantees that minimizers exist. 
Next, $f^*(x) = \iota_C((x+L^*b)/\lambda)$ and 
$g^*(y) = \thalb\|y\|^2-\thalb\|b\|^2$, where 
$C=[-1,1]^n$ is the unit ball with respect to the max norm. 
It follows analogously that the dual objective function 
$y\mapsto g^*(y)+f^*(-L^*y)$ has a minimizer; moreover, it is 
unique due to the strong convexity of $g^*$. Combining 
with the fact that $\dom g = \dom g^* = Y$, we have total duality and 
so $Z\neq\varnothing$ and $K\neq\varnothing$. 
Denote the unique dual solution by $k$:
\begin{equation}
K = \{k\}. 
\end{equation}
The inverses of $A$ and $B$ are given by $A^{-1}\colon x \mapsto  N_C((x+L^*b)/\lambda)$ and $B^{-1} = \Id$. 
If $x=(x_1,\ldots,x_n)$, then 
\begin{equation}
N_C(x) = N_{[-1,1]}(x_1)\times \cdots\times N_{[-1,1]}(x_n), 
\quad\text{where}\;\;
N_{[-1,1]}(\xi) = \begin{cases}
\RM, &\text{if $\xi=-1$;}\\
\{0\}, &\text{if $|\xi|<1$;}\\
\RP, &\text{if $\xi=1$;}\\
\varnothing, &\text{otherwise.}
\end{cases}
\end{equation}
Now 
$Z=Z_k = L^{-1}B^{-1}(k) \cap A^{-1}(-L^*k)$, 
which turns in our current setting into 
\begin{equation}
\label{e:lassoZ}
Z = L^{-1}(k) \cap N_C\big(\tfrac{1}{\lambda}L^*(b-k)\big). 
\end{equation}
We note that \cref{e:lassoZ} immediately yields the following:
\begin{enumerate}
\item If $L$ is injective ($\siff$ $\rnk(L)=n\leq m$), 
then $L^{-1}(k) = Z$ is a singleton.
\item If each $(L^*(b-k))_i \in \left]-\lambda,\lambda\right[$, i.e., 
$\tfrac{1}{\lambda}L^*(b-k)\in\inte C$, then $N_C(\tfrac{1}{\lambda}(L^*(b-k)))=\{0\}$ and so $Z=\{0\}$ is a singleton. 
\end{enumerate}
More general conditions ensuring that $Z$ is a singleton can be found in 
Tibshirani's \cite{Tibuni}; see also 
the recent paper by 
Berk, Brugiapaglia, and Hoheisel \cite{BBH}. 
\end{example}

\section{The Chambolle-Pock operator within the Bredies-Chenchene-Lorenz-Naldi framework}

\label{s:CP}

From now on we assume that 
\begin{equation}
\sigma>0,
\;
\tau > 0,
\; \text{and}\; 
\sigma\tau\|L\|^2 \leq 1.
\end{equation}
Recall that the \emph{Chambolle-Pock (a.k.a.\ Primal-Dual Hybrid Gradient) operator} for the problem \cref{e:p} is
defined by (see \cite{CP} and also \cite{CPsurvey})
\begin{equation} \label{e:T}
\bT \colon X\times Y \to X\times Y\colon 
\begin{bmatrix}x\\y\end{bmatrix} \mapsto 
  \begin{bmatrix}
  x^+\\y^+
  \end{bmatrix}
  :=\begin{bmatrix}
  J_{\sigma A}(x-\sigma L^*y)\\
  J_{\tau B^{-1}}\big({y+\tau L(2x^+-x)}\big)
  \end{bmatrix}. 
\end{equation}

\begin{proposition}\label{p:CPpd}
$\Fix \bT = \bKT$ is convex and closed. 
\end{proposition}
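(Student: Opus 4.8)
The plan is to decode the fixed-point equation $\bT(x,y)=(x,y)$ directly through the defining property of the resolvent, namely that for a maximally monotone operator $C$ and parameter $\gamma>0$ one has $p=J_{\gamma C}(q)\siff q-p\in\gamma Cp$. I would establish the set equality $\Fix\bT=\bKT$ by a short chain of equivalences covering both inclusions, and then invoke \cref{f:saddle} to obtain convexity and closedness for free.

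First I would handle the primal component. A point $(x,y)$ is fixed precisely when $x^+=x$ and $y^+=y$. Unwinding $x^+=x$ with the resolvent characterization gives
\begin{equation*}
x=J_{\sigma A}(x-\sigma L^*y)\siff (x-\sigma L^*y)-x\in\sigma Ax\siff -L^*y\in Ax,
\end{equation*}
where the final step cancels $\sigma>0$. This already recovers one of the two inclusions defining $\bKT$ in \cref{e:saddle}.

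The key observation is that once $x^+=x$ holds, the argument of the second resolvent simplifies, since $2x^+-x=x$. Exploiting this, I would decode the dual component: given $x^+=x$,
\begin{equation*}
y=J_{\tau B^{-1}}(y+\tau Lx)\siff (y+\tau Lx)-y\in\tau B^{-1}y\siff Lx\in B^{-1}y,
\end{equation*}
which is the second defining inclusion. Conversely, starting from $(x,y)\in\bKT$, the inclusion $-L^*y\in Ax$ forces $x^+=x$, whence $2x^+-x=x$, and then $Lx\in B^{-1}y$ forces $y^+=y$; thus $(x,y)\in\Fix\bT$. This yields $\Fix\bT=\bKT$, and the convexity and closedness are then immediate from \cref{f:saddle}.

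I do not anticipate any serious obstacle: the computation is routine once the resolvent identity is in hand. The only point requiring care is the sequential structure of the two updates—the substitution $2x^+-x=x$ is legitimate only after the primal fixed-point condition has been secured—so the two components must be decoded in the correct order rather than independently.
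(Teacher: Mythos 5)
Your proof is correct and follows essentially the same route as the paper: unwinding both resolvent equations (with the substitution $2x^+-x=x$ once the primal fixed-point condition is in hand, which the paper does implicitly) to get the two inclusions defining $\bKT$, then citing the closedness and convexity of the saddle-point set. The only cosmetic difference is that you invoke \cref{f:saddle} directly while the paper cites \cref{p:241120d}, which itself rests on \cref{f:saddle}.
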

\begin{proof}
Let $(x,y)\in X\times Y$. Then 
\begin{subequations}
\begin{align}
(x,y)\in \Fix \bT
&\siff
\begin{bmatrix}
x\\y
\end{bmatrix}
=\bT\begin{bmatrix}
x\\y 
\end{bmatrix}\\
&\siff
x=J_{\sigma A}(x-\sigma L^*(y)) \land 
y=J_{\tau B^{-1}}(y+\tau Lx)\\
&\siff 
x-\sigma L^*y\in x+\sigma Ax \land 
y+\tau Lx\in y+\tau B^{-1}y\\
&\siff -L^*y\in Ax \land 
Lx\in B^{-1}y\\
&\siff (x,y)\in \bKT,
\end{align}
\end{subequations}
and so $\Fix\bT = \bKT$. 
Now invoke \cref{p:241120d}. 
\end{proof}

Following the recent framework proposed by 
Bredies, Chenchene, Lorenz, and Naldi \cite{BCLN}, 
we define block operators on $X\times Y$ via 
\begin{equation}
\bM := \begin{bmatrix}
\tfrac{1}{\sigma}\Id_X & -L^*\\
-L & \tfrac{1}{\tau}\Id_Y
\end{bmatrix} 
\;\;\text{and}\;\;
\bA := 
\begin{bmatrix}
A & L^* \\
-L & B^{-1}
\end{bmatrix}. 
\end{equation}
Then $\bA$ is maximally monotone with 
$\zer \bA = \Fix \bT$ while $\bM$ is a positive semidefinite preconditioner 
and 
\begin{equation}
\label{e:inv(A+M)}
(\bA +\bM)^{-1}
\colon \begin{bmatrix} x\\y\end{bmatrix}
\mapsto 
\begin{bmatrix}
J_{\sigma A}(\sigma x)\\
J_{\tau B^{-1}}\big(2\tau J_{\sigma A}(\sigma x)+\tau y  \big) 
\end{bmatrix}. 
\end{equation}
Moreover, 
\begin{equation}
\bT = (\bA +\bM)^{-1}\bM.
\end{equation}
The preconditioner $\bM$ induces a seminorm on $X\times Y$ via
$\|(x,y)\|_{\bM}^2 = \scal{(x,y)}{\bM(x,y)}$; it is a norm if and only 
if $\sigma\tau\|L\|^2<1$, in which case $\bT$ is firmly nonexpansive 
with respect to $\|\cdot\|_{\bM}$. 
We also assume that there exist another real Hilbert space $Z$ and 
a continuous linear operator $C\colon Z\to X\times Y$ such that 
\begin{equation}
CC^* = \bM.
\end{equation}
The associated reduced operator 
\begin{equation}
\label{e:Tt}
\Tt := C^*(\bA+\bM)^{-1}C, 
\end{equation}
which operates in $Z$, is (classically) firmly nonexpansive 
and 
\begin{equation}
\label{e:Fixx1}
\Fix \Tt = C^*(\Fix \bT) \;\;\text{is convex and closed.}
\end{equation}

We now discuss various cases. 

\subsection{The general case}
One way to find $C$ is by first constructing 
another operator $R$ in the following manner: 
Suppose that  $Z=X\times Y$ and $R\colon Y\to Y$ satisfies 
\begin{equation}
\label{e:Roccurs}
RR^* = \Id_Y - \sigma\tau LL^*
\;\;\text{and set}\;\;
C = \begin{bmatrix}
\tfrac{1}{\sqrt{\sigma}}\Id_X & 0 \\
-\sqrt{\sigma}L & \tfrac{1}{\sqrt{\tau}}R
\end{bmatrix}.
\end{equation}
For instance, $R$ can be chosen as the principal square root of 
$\Id_Y-\sigma\tau LL^*$ for which we even have $R=R^*$. 
Another possibility is to use a Cholesky factorization of $\Id_Y-\sigma\tau LL^*$ 
in the finite-dimensional case.

\subsection{The case when $K\subseteq \ker L^*$}

For simplicity, we shall assume that $Y$ is finite-dimensional, $R$ 
is chosen as the principal square root of $\Id_Y-\sigma\tau LL^*$, 
and $K \subseteq\ker L^*$. 
Then $(\forall k\in K)$ $R^2k = k$. 
If $k\in K\smallsetminus\{0\}$, then 
$k$ is an eigenvector of $R^2$ with eigenvalue $1$; 
thus, $k$ is also an eigenvector of $R$ with eigenvalue
$\sqrt{1}=1$. Hence 
\begin{equation}
(\forall k\in K)\quad  R
k = k,
\end{equation}
and this hold obviously when $k=0$. 

Now assume furthermore that $A$ and $B$ are paramonotone.
Combining \cref{p:CPpd} with \cref{c:para1}\cref{c:para11} 
yields $\Fix \bT = Z\times K$. 
Thus \cref{e:Fixx1} yields 
\begin{subequations}
\begin{align}
\Fix\Tt &= \menge{C^*(z,k)}{(z,k)\in Z\times K}
\\
&= 
\tfrac{1}{\sqrt{\sigma}}\menge{(z-\sigma L^*k,\sqrt{\sigma/\tau}k)}{(z,k)\in Z\times K} \\
&= \tfrac{1}{\sqrt{\sigma}} Z \times \tfrac{1}{\sqrt{\tau}}K.
\end{align}
\end{subequations}

\subsection{The scaled isometry case: $\sigma\tau LL^*=\Id_Y$}

\label{ss:scalediso}

In this subsection, we assume that 
\begin{equation}
\sigma\tau LL^* = \Id_Y.
\end{equation}
(Note that this corresponds to $R=0$ in \cref{e:Roccurs}.) 
In this case, we can pick $Z=X$ and 
\begin{equation}
\label{e:isoC}
C = \begin{bmatrix}
\tfrac{1}{\sqrt{\sigma}}\Id_X \\
-\sqrt{\sigma}L
\end{bmatrix}. 
\end{equation}
Assume first that 
$Y\neq \{0\}$. 
Then
\begin{equation}
1 = \|\Id_Y\| 
= \sigma\tau\|LL^*\|
= \sigma\tau\|L\|^2
= \sigma\tau\|L^*\|^2
\end{equation}
and 
$\|y\|^2 = \scal{y}{\sigma\tau LL^*y}
= \sigma\tau \scal{L^*y}{L^*y}=\sigma\tau\|L^*y\|^2
=\|L^*y\|^2/\|L^*\|^2$. 
Hence 
\begin{equation}
\text{$L^*$ is a constant-multiple of an isometry from $Y$ to $X$,}
\end{equation}
a statement that trivially holds when $Y=\{0\}$. 
If $L$ is a nonzero matrix, this means that rows of $L$ are pairwise orthogonal,
and each row vector has the same length $1/\sqrt{\sigma\tau}$. 
In any case, $L^*$ preserves the topology of $Y$.
Let $x\in X$ and let 
$V$ be a nonempty closed convex subset of $Y$. 
Then $L^*V$ is a nonempty closed convex subset of $X$ and 
\begin{equation}
P_{L^*V}(x) = L^*(P_VLx).
\end{equation}
More generally, if $\rho\in\RR\smallsetminus\{0\}$, then 
\begin{align}
\label{e:241124a}
P_{\rho L^*V}(x) 
&= P_{L^*(\rho V)}(x)
= L^*P_{\rho V}(Lx)
= L^*(\rho P_V(Lx/\rho))
= \rho L^*P_V(Lx/\rho). 
\end{align}

Now assume that $A$ and $B$ are paramonotone. 
Combining \cref{p:CPpd} with \cref{c:para1}\cref{c:para11} 
yields 
\begin{equation}
\label{e:Fixx6}
\Fix \bT = Z\times K. 
\end{equation}
Hence \cref{e:Fixx1} and \cref{e:isoC} result in 
\begin{equation}
\label{e:Fixx2}
\Fix \Tt = \tfrac{1}{\sqrt{\sigma}}(Z-\sigma L^*K). 
\end{equation}
Hence $Z-\sigma L^*K = \sqrt{\sigma} \Fix\Tt$ and so 
\begin{equation}
\label{e:Fixx4}
P_{Z-\sigma L^*K} 
= \sqrt{\sigma} P_{\Fix\Tt}\circ \tfrac{1}{\sqrt{\sigma}}\Id_X; 
\end{equation}
or, equivalently, 
\begin{equation}
\label{e:Fixx5}
P_{\Fix\Tt} = \tfrac{1}{\sqrt{\sigma}} P_{Z-\sigma L^*K}\circ \sqrt{\sigma}\Id_X.
\end{equation}

\begin{corollary}
\label{c:241127a}
Suppose that 
$\sigma\tau LL^*=\Id_Y$, and $A$ and $B$ are paramonotone. Then 
$(\forall \rho>0)$ $Z-\rho L^*K$ is closed and convex.
\end{corollary}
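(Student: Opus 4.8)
The plan is to handle convexity and closedness separately, and to reduce the closedness to a single clean fact about Minkowski differences of \emph{orthogonal} closed convex sets. If either $Z$ or $K$ is empty, then $Z-\rho L^*K=\varnothing$ is trivially closed and convex, so I would assume both are nonempty. Convexity is then immediate: by \cref{c:para1}\cref{c:para11} the set $Z$ is convex, $\rho L^*K$ is convex as the image of the convex set $K$ under the linear map $\rho L^*$, and $Z-\rho L^*K = Z+(-\rho L^*K)$ is a Minkowski sum of convex sets, hence convex. The real content is closedness.

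First I would record the three structural facts available in this subsection. By \cref{c:para1}\cref{c:para11}, $Z$ is closed and convex. Because $\sigma\tau LL^*=\Id_Y$, the operator $L^*$ is (as established just above in this subsection) a scalar multiple of an isometry, hence bounded below with closed range; it therefore carries the closed convex set $K$ to a closed convex set $L^*K$, and scaling by $\rho>0$ keeps $\rho L^*K$ closed and convex. Finally, \cref{c:para1}\cref{c:para13b} supplies the crucial orthogonality $\cspan(Z-Z)\perp\cspan(L^*K-L^*K)=\cspan(\rho L^*K-\rho L^*K)$, which is independent of $\rho$ and is exactly what will rescue closedness.

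The key lemma I would isolate is the following: if $C_1,C_2$ are nonempty closed convex subsets of a Hilbert space with $\cspan(C_1-C_1)\perp\cspan(C_2-C_2)$, then $C_1-C_2$ is closed. To prove it, fix $c_i\in C_i$ and pass to $D_i:=C_i-c_i\subseteq V_i:=\cspan(C_i-C_i)$, so that $C_1-C_2=(c_1-c_2)+(D_1-D_2)$ and it suffices to show $D_1-D_2$ is closed. Given a sequence $d_1^{(n)}-d_2^{(n)}\to w$ with $d_i^{(n)}\in D_i\subseteq V_i$, the orthogonality of $V_1,V_2$ lets the orthogonal projections split the limit: $d_1^{(n)}=P_{V_1}(d_1^{(n)}-d_2^{(n)})\to P_{V_1}w$ and $-d_2^{(n)}=P_{V_2}(d_1^{(n)}-d_2^{(n)})\to P_{V_2}w$. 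Since $D_1$ and $D_2$ are closed, these limits lie in $D_1$ and $-D_2$ respectively, whence $w\in D_1-D_2$.

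Applying this lemma with $C_1=Z$ and $C_2=\rho L^*K$ then yields that $Z-\rho L^*K$ is closed for every $\rho>0$, completing the argument. The hard part is precisely this closedness step: Minkowski differences (equivalently, sums) of closed convex sets are \emph{not} closed in general, and the only thing that saves us is the paramonotonicity-driven orthogonality of \cref{c:para1}\cref{c:para13b}; everything else is routine. I would also remark that this reasoning subsumes the special case $\rho=\sigma$ already visible in \cref{e:Fixx2}, now upgraded to hold uniformly in $\rho$ since the orthogonality does not depend on the scaling.
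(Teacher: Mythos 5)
Your proof is correct, but it takes a genuinely different route from the paper's. The paper proves \cref{c:241127a} in two lines: by \cref{e:Fixx2}, $Z-\sigma L^*K=\sqrt{\sigma}\,\Fix\Tt$, which is closed and convex because $\Tt$ is firmly nonexpansive (\cref{e:Fixx1}); and since $Z$ and $K$ depend only on $(A,L,B)$ and not on the step sizes, one may replace $(\sigma,\tau)$ by $(\rho,\sigma\tau/\rho)$ --- which preserves the standing hypotheses, as $\rho\cdot(\sigma\tau/\rho)LL^*=\Id_Y$ and $\rho\cdot(\sigma\tau/\rho)\|L\|^2\leq 1$ --- to obtain the statement for every $\rho>0$. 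You instead argue directly: convexity is elementary, $L^*K$ is closed because $L^*$ is a scaled isometry (hence bounded below), and closedness of the Minkowski difference follows from the orthogonality $\cspan(Z-Z)\perp\cspan(L^*K-L^*K)$ of \cref{c:para1}\cref{c:para13b} via your projection-splitting lemma, whose proof is sound (the limit $w$ automatically decomposes as $P_{V_1}w+P_{V_2}w$ by continuity of addition). That lemma is essentially \cite[Proposition~29.6]{BC2017}, which is exactly what the paper invokes later when proving the more general \cref{t:proj}\cref{t:proj1} (where, absent the scaled-isometry hypothesis, a closure bar on $L^*K$ is required); your argument is thus a self-contained, unbarred version of that proof, specialized as in \cref{c:proj}\cref{c:proj1}. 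What the paper's route buys is brevity and an illustration of the Bredies-Chenchene-Lorenz-Naldi machinery --- closedness comes for free from fixed-point theory, and uniformity in $\rho$ from the invariance of the solution sets under changes of step size; what yours buys is complete independence from the operator $\Tt$ and an explicit identification of the mechanism (paramonotonicity-induced orthogonality) that rescues closedness of a difference of closed convex sets, which indeed fails in general.
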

\begin{proof}
It follows from \cref{e:Fixx2} that 
$Z-\sigma L^*K$ is closed and convex.
But $\sigma>0$ has been chosen arbitrarily and $Z$ and $K$ do not depend on $\sigma$; consequently, the result follows. 
\end{proof}

\subsection{The Douglas-Rachford case: $Y=X$, $L=\Id_X$, and $\sigma=\tau=1$}

\label{ss:DR}

We specialize the scaled-isometry case (\cref{ss:scalediso}) further to 
$Y=X$ and $\sigma=\tau=1$. Then  
\begin{equation}
C \colon x\mapsto (x,-x)
\;\;\text{and}\;\;
C^* \colon (x,y)\mapsto x-y.
\end{equation}
The reduced operator $\Tt$ from \cref{e:Tt} turns into 
the Douglas-Rachford operator
\begin{equation}
x\mapsto x-J_Ax + J_BR_Ax,
\end{equation}
where $R_A:=2J_A-\Id$ is the reflectant of $A$. 
In the paramonotone case, 
\cref{e:Fixx2} turns into 
\begin{equation}
\label{e:Fixx3}
\Fix \Tt = Z-K.
\end{equation}

\section{Projections involving the solution sets}

\label{s:projsols}

Motivated by the descriptions of $\Fix\Tt$ in \cref{e:Fixx2} and \cref{e:Fixx3}, 
we study projections related to the solution sets $Z$ and $K$.
In this section we will assume that 
\begin{equation}
\text{$A$ and $B$ are paramonotone.}
\end{equation}

\subsection{The general paramonotone case}

\begin{theorem}\label{t:proj}
Suppose that $A$ and $B$ are paramonotone, 
let $(z_0,k_0)\in Z\times K$, 
$x\in X$, and $\rho\in\RR$. 
Then the following hold:
\begin{enumerate}
\item\label{t:proj1} 
$Z-\rho \overline{L^*K}$ is convex and closed.
\item\label{t:proj2} 
$P_{Z-\rho \cl{L^*K}}(x)=P_{Z}(x+\rho L^*k_0)+P_{-\rho \cl{L^*K}}(x-z_0)$.
\item\label{t:proj3} 
If $(Z-z_0)\perp \rho L^* K$, 
then $P_{Z-\rho \cl{L^*K}}(x)=P_Z(x)+P_{-\rho \cl{L^*K}}(x-z_0)$.
\item\label{t:proj4} 
If $Z\perp \rho (L^*K- L^*k_0)$, 
then $P_{Z-\rho \cl{L^*K}}(x)= 
P_Z(x+\rho L^* k_0)+P_{-\rho \cl{L^*K}}(x)$.
\end{enumerate}
\end{theorem}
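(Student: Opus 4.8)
The plan is to isolate the single geometric fact that drives all four items: the sets $Z$ and $-\rho\cl{L^*K}$ lie in translates of two \emph{orthogonal} closed subspaces. Put $U := \cspan(Z-Z)$ and $W := \cspan(L^*K-L^*K)$; by \cref{c:para1}\cref{c:para13b} we have $U\perp W$. Since $z_0\in Z$ and $L^*k_0\in L^*K$, a short computation gives $Z-z_0\subseteq Z-Z\subseteq U$ and $\cl{L^*K}-L^*k_0=\cl{L^*K-L^*k_0}\subseteq W$ (translation commutes with closure and $W$ is closed), whence $Z\subseteq z_0+U$ and $-\rho\cl{L^*K}\subseteq -\rho L^*k_0+W$. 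Here $\cl{L^*K}$ is closed and convex because $K$ is convex and $L^*$ is linear.

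First I would record a general lemma: if $C_1\subseteq a_1+U$ and $C_2\subseteq a_2+W$ are nonempty closed convex sets with $U\perp W$ closed subspaces, then $C_1+C_2$ is closed and, for every $x$, $P_{C_1+C_2}(x)=P_{C_1}(x-a_2)+P_{C_2}(x-a_1)$. Closedness follows by taking a convergent sequence $c_1^n+c_2^n\to y$, writing $c_i^n=a_i+d_i^n$ with $d_1^n\in U$ and $d_2^n\in W$, and using continuity of the orthogonal projectors $P_U,P_W$ to recover the separate convergence of $c_1^n$ and $c_2^n$; closedness of $C_1,C_2$ then places $y$ in the sum. For the formula, write $c_i=a_i+d_i$ and orthogonally decompose $x-a_1-a_2=\xi_U+\xi_W+\xi_\perp$ with $\xi_U\in U$, $\xi_W\in W$, $\xi_\perp\in (U\oplus W)^\perp$; orthogonality splits the objective as $\|x-c_1-c_2\|^2=\|\xi_U-d_1\|^2+\|\xi_W-d_2\|^2+\|\xi_\perp\|^2$, so the two minimizations decouple, and the translation rule $P_{C-a}(v)=P_C(v+a)-a$ produces the stated arguments. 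Applying this lemma with $C_1=Z$, $a_1=z_0$, $C_2=-\rho\cl{L^*K}$, $a_2=-\rho L^*k_0$ yields \cref{t:proj1} (convexity being routine) and, since $x-a_2=x+\rho L^*k_0$ and $x-a_1=x-z_0$, also \cref{t:proj2}.

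For \cref{t:proj3} and \cref{t:proj4} I would use the auxiliary observation that projecting onto a closed convex $D\subseteq b+V$ ($V$ a closed subspace) is unchanged when the argument is shifted by a vector orthogonal to $V$: indeed $P_D(y)$ depends only on $P_V(y-b)$, so shifting by $v\perp V$ leaves $P_V(y-b)$ fixed. In \cref{t:proj3}, the hypothesis $(Z-z_0)\perp\rho L^*K$, specialized to $k=k_0$, gives $\rho L^*k_0\perp (Z-z_0)$, hence $\rho L^*k_0\perp\cspan(Z-z_0)=U$; applying the observation to $D=Z\subseteq z_0+U$ turns $P_Z(x+\rho L^*k_0)$ into $P_Z(x)$. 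Symmetrically, in \cref{t:proj4} the hypothesis $Z\perp\rho(L^*K-L^*k_0)$, specialized to $z=z_0$, gives $z_0\perp\cspan(L^*K-L^*k_0)=W$ (for $\rho\neq 0$; the case $\rho=0$ is immediate), and the observation applied to $D=-\rho\cl{L^*K}\subseteq -\rho L^*k_0+W$ with shift $-z_0\perp W$ turns $P_{-\rho\cl{L^*K}}(x-z_0)$ into $P_{-\rho\cl{L^*K}}(x)$.

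The main obstacle is the closedness assertion in \cref{t:proj1}: Minkowski sums of closed convex sets need not be closed, so the argument must genuinely invoke the orthogonality $U\perp W$ through the continuity of $P_U$ and $P_W$; this is exactly the point where paramonotonicity (via \cref{c:para1}\cref{c:para13b}) is indispensable. Everything else reduces to bookkeeping with orthogonal decompositions and the translation rule for projections.
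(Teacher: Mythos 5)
Your proposal is correct, and its engine is the same as the paper's: the orthogonality $\cspan(Z-Z)\perp\cspan(L^*K-L^*K)$ supplied by \cref{c:para1}\cref{c:para13b}, combined with the fact that for closed convex sets lying in translates of orthogonal closed subspaces the Minkowski sum is closed and its projection splits into a sum of projections. The difference lies in how that fact is handled and how \cref{t:proj3}--\cref{t:proj4} are deduced. The paper simply cites \cite[Proposition~29.6]{BC2017} and applies it three times --- to $Z-z_0$ and $-\rho\cl{L^*K}+\rho L^*k_0$ for \cref{t:proj1}--\cref{t:proj2}, and then once each, under the respective hypotheses, for \cref{t:proj3} and \cref{t:proj4} --- with translation bookkeeping around each application. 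You instead prove the splitting lemma from scratch (both your closedness argument via continuity of $P_U$, $P_W$ and your decoupling of the minimization are sound), and you then derive \cref{t:proj3} and \cref{t:proj4} directly from \cref{t:proj2} via the shift-invariance observation that $P_D$ is unchanged by shifts orthogonal to $V$ when $D\subseteq b+V$. A by-product of your route is a mild sharpening: your arguments use only the specializations $\rho L^*k_0\perp(Z-z_0)$ in \cref{t:proj3} and $z_0\perp\rho(L^*K-L^*k_0)$ in \cref{t:proj4}, i.e., formally weaker hypotheses than those stated (and used in full by the paper's repeated applications of the cited proposition). The paper's route buys brevity; yours buys self-containedness and this slight extra generality.
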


\begin{proof}
Recall from \cref{c:para1}\cref{c:para11} that
$Z$ and $K$ are convex and closed. 
By \cref{c:para1}\cref{c:para13b}, 
$(Z-Z)\perp (\rho L^*K-\rho L^*K)$. 
It follows that 
\begin{equation}
(Z-z_0)\perp (-\rho \overline{L^*K}+\rho L^*k_0).
\end{equation}
By \cite[Proposition~29.6]{BC2017}, the set 
$Z-z_0-\rho\overline{L^*K}+\rho L^*k_0$ is closed and convex 
--- equivalently, $Z-\rho \overline{L^*K}$ is closed and convex 
which yields \cref{t:proj1} ---
and 
  \begin{equation}
    P_{Z-z_0-\rho \overline{L^*K}+\rho L^*k_0}
    =P_{Z-z_0}+P_{-\rho \cl{L^*K}+\rho L^*k_0}.
\end{equation}
This implies \cref{t:proj2} via 
\begin{subequations}
\begin{align}
P_{Z-\rho \cl{L^*K}}(x)
&=P_{(z_0-\rho L^*k_0)+Z-\rho \cl{L^*K}-(z_0-\rho L^*k_0)}(x)\\
&=(z_0-\rho L^*k_0)+P_{Z-\rho \cl{L^*K}-(z_0-\rho L^*k_0)}\big(x-(z_0-\rho L^*k_0)\big)\\
&=(z_0-\rho L^*k_0)+P_{Z-z_0}\big(x-(z_0-\rho L^*k_0)\big)
+P_{-\rho \cl{L^*K}+\rho L^*k_0}\big(x-(z_0-\rho L^*k_0)\big)\\
&=P_{Z}(x+\rho L^*k_0)+P_{-\rho \cl{L^*K}}(x-z_0).
\end{align}
\end{subequations}
Similarly, we deduce that \cref{t:proj3} via 
\begin{subequations}
\begin{align}
P_{Z-\rho \cl{L^*K}}(x)
&=  P_{z_0+(Z-z_0)-\rho \cl{L^*K}}(x)\\
&=z_0+P_{Z-z_0-\rho \cl{L^*K}}(x-z_0)\\
&=z_0+P_{Z-z_0}(x-z_0)+P_{-\rho \cl{L^*K}}(x-z_0)\\
&=P_{Z}(x)+P_{-\rho \cl{L^*K}}(x-z_0). 
\end{align}
\end{subequations}
Finally, \cref{t:proj4} follows from
\begin{subequations}
\begin{align}
P_{Z-\rho \cl{L^*K}}(x)
&=P_{-\rho L^*k_0+Z-\rho \cl{L^*K}+\rho L^*k_0}(x)\\
&=-\rho L^*k_0+P_{Z-\rho \cl{L^*K}+\rho L^*k_0}(x+\rho L^*k_0)\\
&=-\rho L^*k_0
   +P_{Z}(x+\rho L^*k_0)+P_{-\rho \cl{L^*K}+\rho L^*k_0}(x+\rho L^*k_0)\\
&=P_{Z}(x+\rho L^*k_0)+P_{-\rho \cl{L^*K}}(x).
\end{align}
\end{subequations}
The proof is complete. 
\end{proof}

\begin{theorem}\label{t:parasha}
Suppose that $A$ and $B$ are paramonotone, 
let $\rho>0$, and let $x\in X$. 
Then the following hold:
\begin{enumerate}
\item
\label{t:parasha1} 
If $k_0\in K$, then 
$J_{\rho A}P_{Z-\rho \cl{L^*K}}(x)=P_Z(x+\rho L^*k_0)$. 
\item
\label{t:parasha2} 
If $z_0 \in Z$ and  $(Z-z_0)\perp \rho L^*K$, 
then 
$J_{\rho A} P_{Z-\rho \cl{L^*K}}(x)=P_Z(x)$. 
\item 
\label{t:parasha3}
If $K\cap \ker L^*\neq \varnothing$, then 
$J_{\rho A} P_{Z-\rho \cl{L^*K}}(x)=P_Z(x)$. 
\end{enumerate}
\end{theorem}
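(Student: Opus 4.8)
The plan is to reduce all three parts to a single inclusion valid at every primal solution, and then simply read off the value of $J_{\rho A}$ from the projection decompositions already recorded in \cref{t:proj}. The only place where real work happens is establishing that inclusion; everything else is bookkeeping with the resolvent.

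First I would isolate the key lemma: \emph{for every $z\in Z$ one has $\cl{L^*K}\subseteq -Az$.} To prove it, fix $z\in Z$. Since $A$ and $B$ are paramonotone, \cref{thm:paraZK} gives $K_z=K$, and \cref{p:241120b}\cref{p:241120b1.5} gives $L^*(K_z)=(-Az)\cap L^*BLz$. Hence $L^*K=L^*(K_z)\subseteq -Az$. Because $A$ is maximally monotone, $Az$ is closed, so $-Az$ is closed and therefore $\cl{L^*K}\subseteq -Az$. Equivalently, for any $\rho>0$, every element of $-\rho\cl{L^*K}$ lies in $\rho Az$.

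Next I would convert this into a resolvent statement. Since $J_{\rho A}=(\Id+\rho A)^{-1}$ is single-valued with full domain, $J_{\rho A}(p)=a$ is equivalent to $p-a\in\rho Aa$. So suppose a point $p$ admits a decomposition $p=a+b$ with $a\in Z$ and $b\in-\rho\cl{L^*K}$. The key lemma yields $b\in\rho Aa$, whence $p=a+b\in a+\rho Aa=(\Id+\rho A)a$, and therefore $J_{\rho A}(p)=a$. Now \cref{t:parasha1} and \cref{t:parasha2} are immediate: in each case set $p:=P_{Z-\rho\cl{L^*K}}(x)$ and decompose it using \cref{t:proj}, namely via \cref{t:proj2} as $p=P_Z(x+\rho L^*k_0)+P_{-\rho\cl{L^*K}}(x-z_0)$ for \cref{t:parasha1} (taking any $z_0\in Z$, which exists because $k_0\in K$ forces $K\neq\varnothing$, hence $Z\neq\varnothing$ by \cref{c:241120c}), and via \cref{t:proj3} as $p=P_Z(x)+P_{-\rho\cl{L^*K}}(x-z_0)$ for \cref{t:parasha2}. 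In both, the first summand $a$ lies in $Z$ and the second summand $b$ lies in $-\rho\cl{L^*K}$, so applying $J_{\rho A}$ reproduces $a$, which is exactly the asserted right-hand side.

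Finally, \cref{t:parasha3} drops out of \cref{t:parasha1}: choosing $k_0\in K\cap\ker L^*$ gives $L^*k_0=0$, so $P_Z(x+\rho L^*k_0)=P_Z(x)$. The one genuinely substantive step is the key lemma, and this is exactly where paramonotonicity is indispensable --- without \cref{thm:paraZK} one would only know $L^*K\subseteq -Az$ at a single matched pair $(z,k)\in\bKT$ rather than at an \emph{arbitrary} $z\in Z$, and the clean evaluation of $J_{\rho A}$ on the whole projected set $Z-\rho\cl{L^*K}$ would collapse. I do not anticipate any difficulty beyond verifying the closedness used to pass from $L^*K\subseteq -Az$ to $\cl{L^*K}\subseteq -Az$.
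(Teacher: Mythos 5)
Your proposal is correct and takes essentially the same route as the paper: the substance in both is the inclusion $-\rho\,\cl{L^*K}=-\rho\,\cl{L^*K_z}\subseteq\rho Az$ for every $z\in Z$ (via \cref{thm:paraZK} and \cref{p:241120b}\cref{p:241120b1.5}, using closedness of $Az$), combined with the decompositions from \cref{t:proj} and the characterization $p-a\in\rho Aa\siff a=J_{\rho A}p$. The only cosmetic difference is that you deduce \cref{t:parasha3} directly from \cref{t:parasha1} using $L^*k_0=0$, whereas the paper derives the orthogonality hypothesis from \cref{c:para1}\cref{c:para13b} and invokes \cref{t:parasha2}; both are equally valid.
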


\begin{proof}
Let  $z_0\in Z$. 
\cref{t:parasha1}: 
Let $k_0\in K$, and 
set $z:=P_Z(x+\rho L^*k_0)$. 
Using \cref{t:proj}\cref{t:proj2} and \cref{thm:paraZK}, 
we deduce that 
\begin{align}
P_{Z-\rho \cl{L^*K}}(x)-z&=P_{-\rho\cl{L^*K}}(x-z_0)
\in -\rho \cl{L^*K}=-\rho\cl{L^*K_z} \subseteq \rho\cl{Az}=\rho Az.
\end{align}
Hence 
$P_{Z-\rho \cl{L^*K}}(x)\in (\Id+\rho A)z$; 
equivalently, 
$z = J_{\rho A}P_{Z-\rho \cl{L^*K}}(x)$. 

\cref{t:parasha2}:
Assume that $(Z-z_0)\perp L^*K$, and set $z:=P_Z(x)$. 
Using \cref{t:proj}\cref{t:proj3} and \cref{thm:paraZK},
\begin{align}
P_{Z-\rho \cl{L^*K}}(x)-z&=
P_{-\rho \cl{L^*K}}(x-z_0)\in -\rho\cl{L^*K}=-\rho\cl{L^* K_z}\subseteq 
\rho\cl{Az}=\rho Az.
\end{align}
Hence 
$P_{Z-\rho \cl{L^*K}}(x)\in (\Id+\rho A)z$; 
equivalently, 
$z = J_{\rho A}P_{Z-\rho \cl{L^*K}}(x)$. 

\cref{t:parasha3}:
Assume that $k_0\in K\cap\ker L^*$. 
Then \cref{c:para1}\cref{c:para13b} yields
$0=\scal{Z-z_0}{L^*K-L^*k_0}=\scal{Z-z_0}{L^*K}$. 
Now apply \cref{t:parasha2}. 
\end{proof}

\begin{remark}
Specializing the results in this section to 
the Douglas-Rachford setting (see \cref{ss:DR}), we
recover the results of \cite[Section~6]{BBHM}. 
\end{remark}

\subsection{Revisiting the scaled isometry case}

In addition to the paramonotonicity assumption of this section, we assume in 
this subsection the scaled isometry case, i.e., 
\begin{equation}
\sigma\tau LL^* = \Id_Y. 
\end{equation}
We have seen in \cref{ss:scalediso} that $L^*K$ is already closed; 
therefore, we can remove all closure bars in \cref{t:proj} 
and \cref{t:parasha} in this case 
and obtain the following two results. 

\begin{corollary}\label{c:proj}
Suppose that $A$ and $B$ are paramonotone, 
and $\sigma\tau LL^* = \Id_Y$. 
Let $(z_0,k_0)\in Z\times K$, 
$x\in X$, and $\rho\in\RR$. 
Then the following hold:
\begin{enumerate}
\item\label{c:proj1} 
$Z-\rho{L^*K}$ is convex and closed.
\item\label{c:proj2} 
$P_{Z-\rho{L^*K}}(x)=P_{Z}(x+\rho L^*k_0)+P_{-\rho{L^*K}}(x-z_0)$.
\item\label{c:proj3} 
If $(Z-z_0)\perp \rho L^* K$, 
then $P_{Z-\rho{L^*K}}(x)=P_Z(x)+P_{-\rho{L^*K}}(x-z_0)$.
\item\label{c:proj4} 
If $Z\perp \rho (L^*K- L^*k_0)$, 
then $P_{Z-\rho{L^*K}}(x)= 
P_Z(x+\rho L^* k_0)+P_{-\rho{L^*K}}(x)$.
\end{enumerate}
\end{corollary}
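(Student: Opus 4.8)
The plan is to obtain \cref{c:proj} as an immediate specialization of \cref{t:proj}, the only additional ingredient being that the scaled isometry hypothesis forces $L^*K$ to be closed, so that every closure bar appearing in \cref{t:proj} becomes redundant and may simply be deleted.

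First I would record the two structural facts already available. By \cref{c:para1}\cref{c:para11}, paramonotonicity of $A$ and $B$ guarantees that $K$ is convex and closed. Moreover, as established in \cref{ss:scalediso}, the assumption $\sigma\tau LL^* = \Id_Y$ makes $L^*$ a constant multiple of an isometry from $Y$ into $X$; in particular $L^*$ maps nonempty closed convex subsets of $Y$ onto nonempty closed convex subsets of $X$. Applying this to $K$ yields that $L^*K$ is closed and convex, and hence $\cl{L^*K} = L^*K$.

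With this identity in hand, each of the four conclusions follows by invoking the corresponding item of \cref{t:proj} and replacing $\cl{L^*K}$ by $L^*K$ throughout: \cref{c:proj1} is \cref{t:proj}\cref{t:proj1}; \cref{c:proj2} is \cref{t:proj}\cref{t:proj2}; \cref{c:proj3} is \cref{t:proj}\cref{t:proj3}; and \cref{c:proj4} is \cref{t:proj}\cref{t:proj4}. No new computation is required, since the side hypotheses $(Z-z_0)\perp \rho L^*K$ and $Z\perp \rho(L^*K - L^*k_0)$ are verbatim those of the parent theorem, and the perpendicularity relations are unaffected by the (now vacuous) closure operation.

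The main, and essentially only, obstacle is the justification that $L^*K$ is closed. In general the continuous linear image of a closed convex set need not be closed, which is precisely why \cref{t:proj} had to carry closure bars; here that difficulty evaporates because $L^*$ is topologically a scaled isometry. I would therefore be careful to cite the isometry observation from \cref{ss:scalediso} explicitly, rather than treat the closedness of $L^*K$ as automatic, since this is the single place where the extra hypothesis $\sigma\tau LL^* = \Id_Y$ is used.
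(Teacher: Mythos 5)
Your proposal is correct and is essentially the paper's own argument: the paper likewise notes that in the scaled isometry case $L^*$ is a constant multiple of an isometry (as recorded in \cref{ss:scalediso}), so $L^*K$ is already closed (with $K$ nonempty, closed, and convex by \cref{c:para1}\cref{c:para11}), and then simply removes the closure bars in \cref{t:proj}. Your care in flagging the closedness of $L^*K$ as the one genuinely new ingredient is exactly the right emphasis.
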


\begin{corollary}\label{c:parasha}
Suppose that $A$ and $B$ are paramonotone, 
and $\sigma\tau LL^* = \Id_Y$. 
Let $\rho>0$ and $x\in X$. 
Then the following hold:
\begin{enumerate}
\item
\label{c:parasha1} 
If $k_0\in K$, then 
$J_{\rho A}P_{Z-\rho{L^*K}}(x)=P_Z(x+\rho L^*k_0)$. 
\item
\label{c:parasha2} 
If $z_0 \in Z$ and  $(Z-z_0)\perp \rho L^*K$, 
then 
$J_{\rho A} P_{Z-\rho{L^*K}}(x)=P_Z(x)$. 
\item 
\label{c:parasha3}
If $K\cap \ker L^*\neq \varnothing$, then 
$J_{\rho A} P_{Z-\rho{L^*K}}(x)=P_Z(x)$. 
\end{enumerate}
\end{corollary}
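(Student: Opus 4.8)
The plan is to obtain \cref{c:parasha} as the scaled-isometry specialization of \cref{t:parasha}; the only point requiring verification is that, under $\sigma\tau LL^* = \Id_Y$, the set $L^*K$ is \emph{already} closed, so that every closure bar appearing in \cref{t:parasha} may simply be dropped.

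First I would invoke the analysis of \cref{ss:scalediso}. There it is recorded that the hypothesis $\sigma\tau LL^* = \Id_Y$ forces $L^*$ to be a nonzero constant-multiple of an isometry, with $\|L^*y\| = \|y\|/\sqrt{\sigma\tau}$ for every $y\in Y$; in particular $L^*$ preserves the topology of $Y$ (and this is trivial when $Y=\{0\}$). Such an operator is a homeomorphism onto its closed range and therefore carries closed subsets of $Y$ to closed subsets of $X$.

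Next, since $A$ and $B$ are paramonotone, \cref{c:para1}\cref{c:para11} guarantees that $K$ is closed and convex. Applying the topological observation above to the closed set $K$ yields that $L^*K$ is closed, whence $\cl{L^*K} = L^*K$. With this identity in hand, the three claims become verbatim restatements of \cref{t:parasha}: part \cref{c:parasha1} is \cref{t:parasha}\cref{t:parasha1}, part \cref{c:parasha2} is \cref{t:parasha}\cref{t:parasha2}, and part \cref{c:parasha3} is \cref{t:parasha}\cref{t:parasha3}, in each case with $\cl{L^*K}$ replaced by $L^*K$. Since the hypotheses of \cref{c:parasha} are exactly those of \cref{t:parasha} augmented by the scaled-isometry assumption, no further argument is needed.

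The essentially only obstacle is establishing the closedness of $L^*K$; once the scaled-isometry structure of $L^*$ is in place, this is immediate from the closedness of $K$, and the remainder of the proof is a mechanical transcription of \cref{t:parasha} with the (now redundant) closure operations removed.
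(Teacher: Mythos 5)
Your proposal is correct and follows exactly the paper's route: the paper derives \cref{c:parasha} from \cref{t:parasha} by noting (as recorded in \cref{ss:scalediso}) that under $\sigma\tau LL^*=\Id_Y$ the operator $L^*$ is a scaled isometry, so $L^*K$ is already closed and all closure bars may be dropped. Your additional explicit step---that $K$ itself is closed and convex via \cref{c:para1}\cref{c:para11} before applying the scaled-isometry fact---is precisely the justification the paper leaves implicit, so there is no substantive difference.
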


\begin{corollary}
\label{c:Gukeshwon}
Suppose that $A$ and $B$ are paramonotone, 
$\sigma\tau LL^* = \Id_Y$, and 
$K\cap\ker L^*\neq\varnothing$. 
Let $(x_0,y_0)\in X\times Y$.
Set 
$w_0 := \tfrac{1}{\sqrt{\sigma}}\big(x_0-\sigma L^*y_0\big)$.
Then 
\begin{align}
\label{e:241128c}
w &:= P_{\Fix \Tt}w_0 = 
\tfrac{1}{\sqrt{\sigma}} P_{Z-\sigma L^*K}(\sqrt{\sigma}w_0), 
\end{align}
while the $\bM$-projection of $(x_0,y_0)$ onto $\Fix\bT$ is given by 
\begin{equation}
\label{e:241128e}
\Big(P_Z(x_0-\sigma L^*y_0),J_{\tau B^{-1}}\big(2\tau L P_Z(x_0-\sigma L^*y_0)
{
-\tau LP_{Z-\sigma L^*K}(\sqrt{\sigma}w_0)
}
\big)\Big).
\end{equation}
\end{corollary}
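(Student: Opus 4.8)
The plan is to handle the two displayed formulas in turn, deriving \cref{e:241128c} directly and then bootstrapping \cref{e:241128e} from it. For the scaled-isometry operator $C$ of \cref{e:isoC} one has $C^*\colon(x,y)\mapsto\tfrac{1}{\sqrt{\sigma}}x-\sqrt{\sigma}L^*y$, whence $C^*(x_0,y_0)=\tfrac{1}{\sqrt{\sigma}}(x_0-\sigma L^*y_0)=w_0$. Thus \cref{e:241128c} is exactly \cref{e:Fixx5} evaluated at $w_0$, since that identity reads $P_{\Fix\Tt}=\tfrac{1}{\sqrt{\sigma}}P_{Z-\sigma L^*K}\circ\sqrt{\sigma}\Id_X$.

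For the $\bM$-projection I would exploit the factorization $CC^*=\bM$, which yields the seminorm identity $\|v\|_{\bM}^2=\scal{v}{CC^*v}=\|C^*v\|^2$ for all $v\in X\times Y$. Consequently, for $s\in\Fix\bT$ we have $\|(x_0,y_0)-s\|_{\bM}=\|w_0-C^*s\|$, and as $s$ runs through $\Fix\bT$ its image $C^*s$ runs through $C^*(\Fix\bT)=\Fix\Tt$ by \cref{e:Fixx1}. Minimizing the right-hand side therefore forces $C^*s=P_{\Fix\Tt}w_0=w$, so the minimal $\bM$-distance is attained precisely at the $s\in\Fix\bT$ with $C^*s=w$. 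Since $\sigma\tau\|L\|^2=1$ in the scaled-isometry case, $\bM$ is only a seminorm and such an $s$ need not be unique; the canonical Bredies-Chenchene-Lorenz-Naldi shadow selects one.

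I would then exhibit that shadow as $s^{\star}:=(\bA+\bM)^{-1}Cw$ and confirm it is the asserted projection. Because $w\in\Fix\Tt$, \cref{e:Tt} gives $C^*s^{\star}=C^*(\bA+\bM)^{-1}Cw=\Tt w=w$; and using $\bT=(\bA+\bM)^{-1}CC^*$ we get $\bT s^{\star}=(\bA+\bM)^{-1}CC^*s^{\star}=(\bA+\bM)^{-1}Cw=s^{\star}$, so $s^{\star}\in\Fix\bT$ attains the minimal $\bM$-distance. It then remains to compute $s^{\star}$ explicitly: feeding $Cw=\big(\tfrac{1}{\sqrt{\sigma}}w,-\sqrt{\sigma}Lw\big)$ into \cref{e:inv(A+M)}, and noting $\sqrt{\sigma}w=P_{Z-\sigma L^*K}(\sqrt{\sigma}w_0)$ with $\sqrt{\sigma}w_0=x_0-\sigma L^*y_0$, the first component is $J_{\sigma A}(\sqrt{\sigma}w)=J_{\sigma A}P_{Z-\sigma L^*K}(x_0-\sigma L^*y_0)$, which collapses to $P_Z(x_0-\sigma L^*y_0)$ by \cref{c:parasha}\cref{c:parasha3}; the second component is then read off directly as the $J_{\tau B^{-1}}$-term in \cref{e:241128e}.

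The main obstacle is the middle step: reducing the degenerate $\bM$-seminorm projection onto $\Fix\bT$ to the genuine projection onto $\Fix\Tt$, and then identifying the correct lift. Everything rests on the clean identity $\|\cdot\|_{\bM}=\|C^*\cdot\|$ together with $C^*(\Fix\bT)=\Fix\Tt$, and on recognizing $(\bA+\bM)^{-1}C$ as a section of $C^*$ over the fixed-point set. Once this is secured, the two verifications ($C^*s^{\star}=w$ and $s^{\star}\in\Fix\bT$) and the resolvent computation through \cref{e:inv(A+M)} and \cref{c:parasha}\cref{c:parasha3} are routine.
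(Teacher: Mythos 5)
Your computations are correct, and in their outer steps they coincide with the paper's own proof: the identification $w_0=C^*(x_0,y_0)$, the use of \cref{e:Fixx5} to get \cref{e:241128c}, the evaluation of $(\bA+\bM)^{-1}Cw$ through \cref{e:inv(A+M)}, and the collapse $J_{\sigma A}P_{Z-\sigma L^*K}(\sqrt{\sigma}w_0)=P_Z(x_0-\sigma L^*y_0)$ via \cref{c:parasha}\cref{c:parasha3} are exactly the steps taken there. Where you genuinely depart from the paper is the middle step: the paper simply invokes \cite[Theorem~3.1(i)]{BMSW} together with $\Fix\bT=Z\times K$ (\cref{e:Fixx6}) to assert that the $\bM$-projection onto $\Fix\bT$ is well defined and equals $(\bM+\bA)^{-1}Cw$, whereas you reconstruct this from scratch using the factorization $\|\cdot\|_{\bM}^2=\|C^*\cdot\|^2$, the identity $C^*(\Fix\bT)=\Fix\Tt$ from \cref{e:Fixx1}, and the observation that $(\bA+\bM)^{-1}C$ is a section of $C^*$ over the fixed point sets (your two verifications $C^*s^\star=\Tt w=w$ and $\bT s^\star=s^\star$ are both valid). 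This self-contained substitute for the citation is attractive.

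However, as written it has a gap: well-definedness. Your argument shows that the minimizers of $s\mapsto\|(x_0,y_0)-s\|_{\bM}$ over $\Fix\bT$ are precisely the points $s\in\Fix\bT$ with $C^*s=w$, and that $s^\star=(\bA+\bM)^{-1}Cw$ is \emph{one} of them. But the corollary asserts a formula for \emph{the} $\bM$-projection, and you explicitly leave the fiber's cardinality open (``such an $s$ need not be unique; the canonical Bredies-Chenchene-Lorenz-Naldi shadow selects one''). An appeal to an unexplained selection rule is not a proof that the displayed pair \cref{e:241128e} is the $\bM$-projection; moreover, the disclaimer is actually false in this setting, since uniqueness holds and follows from tools already in the paper. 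Indeed, if $(z_1,k_1)$ and $(z_2,k_2)$ lie in $Z\times K$ with $C^*(z_1,k_1)=C^*(z_2,k_2)$, then $z_1-z_2=\sigma L^*(k_1-k_2)$; by \cref{c:para1}\cref{c:para13b} we have $\scal{z_1-z_2}{L^*k_1-L^*k_2}=0$, hence $\|z_1-z_2\|^2=\sigma\scal{z_1-z_2}{L^*(k_1-k_2)}=0$, so $z_1=z_2$ and $L^*(k_1-k_2)=0$; finally $\sigma\tau LL^*=\Id_Y$ forces $k_1-k_2=\sigma\tau LL^*(k_1-k_2)=0$. Inserting this short uniqueness argument (or citing \cite[Theorem~3.1(i)]{BMSW} as the paper does) closes the gap and makes your proof complete.
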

\begin{proof}
Setting $\bu_0 := (x_0,y_0)$,
we see that $w_0 = C^*\bu_0$ using \cref{e:isoC}.
Then \cref{e:Fixx5} yields the right identity in \cref{e:241128c}. 
By \cite[Theorem~3.1(i)]{BMSW} and \cref{e:Fixx6}, we have 
\begin{equation}
\label{e:241128a}
\bu := P^{\bM}_{\Fix \bT}\bu_0=P^{\bM}_{Z\times K}\bu_0 = (\bM+\bA)^{-1}Cw; 
\end{equation}
here $P^{\bM}_{\Fix\bT}$ denotes the (well-defined!) 
projection onto $\Fix\bT$ with respect to  the $\bM$-norm. 
Combining \cref{e:241128a}, \cref{e:isoC}, and 
\cref{e:inv(A+M)} gives
\begin{equation}
\label{e:241128d}
\bu 
=
\begin{bmatrix}
J_{\sigma A}(\sqrt{\sigma}w)\\
J_{\tau B^{-1}}\big(2\tau LJ_{\sigma A}(\sqrt{\sigma}w)
{ -
\tau\sqrt{\sigma}Lw
}
\big)
\end{bmatrix}.
\end{equation}
In view of \cref{e:241128c}, we have, 
using now \cref{c:parasha}\cref{c:parasha3}, 
\begin{align}
J_{\sigma A}(\sqrt{\sigma}w)
&=
J_{\sigma A}P_{Z-\sigma L^*K}(\sqrt{\sigma} w_0)
=P_Z(\sqrt{\sigma} w_0)
=P_Z(x_0-\sigma L^*y_0). 
\end{align}
Substituting this into 
\cref{e:241128d} yields \cref{e:241128e}.
\end{proof}

\begin{remark}
\cref{c:Gukeshwon} is interesting in certain algorithmic setting 
(see \cite[Section~5]{BMSW}). The 
projection $P_Z(x_0-\sigma L^*
{
y_0
}
)$ occurs also in 
\cite[Lemma~5.1]{BMSW} where on the one hand $(A,B)$ is more restrictive 
(normal cones of linear subspaces instead of paramonotone operators)
and on the other hand $L$ is less restrictive (general instead of a scaled isometry).
\cref{c:Gukeshwon} also generalizes the Douglas-Rachford case 
in \cite[Section~5.1.2]{BMSW}. 
\end{remark}

\section{Examples involving normal cone operators}

\label{s:normalcones}

\subsection{Two normal cone operators: feasibility}

In this subsection, we assume, respectively,  that 
\begin{equation}
\label{e:feas1}
\text{$U$ and $V$ are nonempty closed convex subsets of $X$ and $Y$, and that 
$U \cap L^{-1}(V) \neq\varnothing$.}
\end{equation}
We also assume that 
\begin{equation}
\label{e:feas2}
A = N_U \;\;\text{and}\;\; B = N_V.
\end{equation}
Note that $A = \partial\iota_U$ and $B=\partial\iota_V$, 
which shows that we can also think about this as the set up in 
\cref{s:subdiffs}.

\begin{theorem}
\label{t:feas}
The following hold under assumptions \cref{e:feas1} and \cref{e:feas2}:
\begin{enumerate}
\item 
\label{t:feasZ}
$Z = U \cap L^{-1}(V)$. 
\item 
\label{t:feasK}
$K = N_{V-LU}(0) = N_{\cl{V-LU}}(0)$. 
\item 
\label{t:feascones}
If $U$ and $V$ are cones, then 
$K = V^\ominus \cap L^{-*}(U^\oplus)$. 
\item 
\label{t:feaslins}
If $U$ and $V$ are linear subspaces, then 
$K = V^\perp \cap L^{-*}(U^\perp)$. 
\item
\label{t:feasinte}
If $V\cap \inte LU\neq\varnothing$ or 
$\inte(V)\cap LU\neq\varnothing$ or $0\in\inte\cl{V-LU}$, then $K=\{0\}$. 
\end{enumerate}
\end{theorem}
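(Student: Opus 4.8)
The plan is to treat the five items in order, throughout using the identification $A=\partial\iota_U=N_U$ and $B=\partial\iota_V=N_V$ (so this is a special case of \cref{s:subdiffs}), together with the saddle-point descriptions $\bKT=\gra\bK$ (\cref{p:241120d}) and $K=\ran\bK$ (\cref{c:241120c}). For \cref{t:feasZ} I would argue straight from the definition of $Z$. Since $\dom N_U=U$ and $\dom N_V=V$, membership $x\in Z$ forces $N_U(x)\neq\varnothing$ and $N_V(Lx)\neq\varnothing$, hence $x\in U$ and $Lx\in V$, giving $Z\subseteq U\cap L^{-1}(V)$. Conversely, for $x\in U\cap L^{-1}(V)$ one has $0\in N_U(x)$ and $0\in N_V(Lx)$, so $0\in N_U(x)+L^*N_V(Lx)$ and therefore $x\in Z$. (This also drops out of \cref{p:fgincl} once one notes $f+g\circ L=\iota_{U\cap L^{-1}(V)}$.)

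The heart of the theorem is \cref{t:feasK}. Here I would use $K=\ran\bK=\menge{y\in Y}{\exists\,x,\ (x,y)\in\bKT}$ and unpack $(x,y)\in\bKT$ into $-L^*y\in N_U(x)$ and $y\in N_V(Lx)$. Rewriting these two normal-cone inequalities as $\scal{y}{Lu-Lx}\geq 0$ for all $u\in U$ and $\scal{y}{v-Lx}\leq 0$ for all $v\in V$, and combining them through the identity $\scal{y}{v-Lu}=\scal{y}{v-Lx}+\scal{y}{Lx-Lu}$, I obtain $\scal{y}{v-Lu}\leq 0$ for all $v\in V$, $u\in U$, which is exactly $y\in N_{V-LU}(0)$ (note $0\in V-LU$ by feasibility \cref{e:feas1}). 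For the reverse inclusion --- the step that genuinely uses \cref{e:feas1} --- I would feed any witness $x_0\in U\cap L^{-1}(V)$ back into the two inequalities: testing $y\in N_{V-LU}(0)$ against $v=Lx_0\in V$ recovers $-L^*y\in N_U(x_0)$, and against $u=x_0\in U$ recovers $y\in N_V(Lx_0)$, so $(x_0,y)\in\bKT$ and hence $y\in K$. Since $V-LU$ is convex (a sum of convex sets) and contains $0$, the equality $N_{V-LU}(0)=N_{\cl{V-LU}}(0)$ comes for free, because the normal cone of a convex set at a point of the set depends only on its closure. I expect this reverse inclusion to be the main obstacle, but the feasibility witness resolves it cleanly.

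Items \cref{t:feascones} and \cref{t:feaslins} I would obtain by specializing \cref{t:feasK}. When $U$ and $V$ are convex cones, $V-LU$ is again a convex cone, so $N_{V-LU}(0)$ equals its polar cone; splitting the defining inequality $\scal{y}{v-Lu}\leq 0$ by testing $u=0$ and $v=0$ separately, and using $\scal{y}{Lu}=\scal{L^*y}{u}$, factors it as $V^\ominus\cap L^{-*}(U^\oplus)$. The subspace case \cref{t:feaslins} then follows at once, since for a subspace both the polar and dual cones collapse to the orthogonal complement.

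Finally, for \cref{t:feasinte} the guiding fact is that $N_C(0)=\{0\}$ whenever $0\in\inte C$, because an interior point admits feasible displacements in every direction. I would show that each of the three hypotheses forces $0\in\inte\cl{V-LU}$. If $V\cap\inte LU\neq\varnothing$, choosing $p$ in this intersection yields a neighborhood of $0$ contained in $p-LU\subseteq V-LU$; the condition $\inte V\cap LU\neq\varnothing$ is handled symmetrically via $V-q$ for $q\in\inte V\cap LU$; and $0\in\inte\cl{V-LU}$ is the hypothesis itself. Applying the interior fact to $\cl{V-LU}$ and invoking \cref{t:feasK} then gives $K=\{0\}$.
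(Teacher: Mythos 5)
Your proposal is correct and follows essentially the same route as the paper: item \cref{t:feasZ} by direct unpacking of the normal cones, item \cref{t:feasK} by computing the dual set at a feasibility witness (the paper fixes $z\in Z$ and shows $K_z=N_{V-LU}(0)$ via $K=\bigcup_{z\in Z}K_z$, which is exactly your two-inclusion argument with $x_0=z$), and items \cref{t:feascones}--\cref{t:feasinte} by specializing \cref{t:feasK}. The only difference is cosmetic: where the paper cites facts from \cite{BC2017} (normal cone of a sum, polar-cone calculus, the tangent-cone criterion), you supply the elementary inner-product verifications --- an alternative the paper itself explicitly sanctions (``or by direct verification'').
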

\begin{proof}
\cref{t:feasZ}: 
Let $x\in X$.
We have the equivalences
$x\in Z$
$\siff$
$0\in Ax+L^*BLx$
$\siff$
$0\in N_U(x)+ L^*N_V(Lx)$
$\siff$
$x\in U \land Lx\in V$
$\siff$
$x\in U\cap L^{-1}(V)$. 

\cref{t:feasK}: 
Let $z\in Z$. 
By \cref{t:feasZ}, $z\in U\cap L^{-1}(V)$. 
Then 
\begin{subequations}
\begin{align}
K_z &= 
(-L^{-*}N_U(z))\cap N_V(Lz)
= (-N_{LU}(Lz))\cap N_V(Lz)\\
&=
N_{-LU}(-Lz)\cap N_V(Lz)
= N_{V-LU}(0),
\end{align}
\end{subequations}
where the last equality follows either from \cite[Proposition~16.61(i)]{BC2017} 
or by direct verification. Now apply \cref{c:241120c}. 

\cref{t:feascones}:
Using \cite[Propositions~6.27 and 6.37]{BC2017},
we obtain 
$N_{V-LU}(0)=(V-LU)^\ominus = (V+(-L)U)^\ominus
= V^\ominus \cap ((-L)U)^\ominus
= V^\ominus \cap ((-L)(U^\ominus)^\ominus)^\ominus
= V^\ominus \cap ((-L)^*)^{-1}(U^\ominus)
= V^\ominus \cap L^{-*}(U^\oplus)$.  

\cref{t:feaslins}: This follows from \cref{t:feascones}. 

\cref{t:feasinte}: The assumptions imply 
$T_{V-LU}(0)=Y$, hence $K=N_{V-LU}(0)=\{0\}$ by 
\cite[Proposition~6.44(ii)]{BC2017}. 
\end{proof}

\begin{remark}
\cref{t:feas}\cref{t:feaslins} appears also in \cite[Section~5.2]{BMSW}. 
In the Douglas-Rachford case ($Y=X$ and $L=\Id$), 
\cref{t:feasK} states that $K = N_{V-U}(0)$; 
see also \cite[Corollary~3.9]{31}. 
\end{remark}

\subsection{Normal cone operator of an affine subspace}

\begin{proposition}
\label{p:affpara}
Let $U$ be a closed affine subspace of $X$, 
suppose that $A= N_U$ and $B$ is paramonotone. 
Then the following hold:
\begin{enumerate}
\item 
\label{p:affpara1}
$Z=U\cap (L^{-1}B^{-1}L^{-*}((U-U)^\perp)=U\cap ( (L^*BL)^{-1}((U-U)^\perp))\subseteq U$.
\item 
\label{p:affpara2}
$(\forall z\in Z)\quad K=L^{-*}((U-U)^\perp)\cap (BLz)\subseteq L^{-*}((U-U)^\perp)$.
\item 
\label{p:affpara3}
$Z-Z\perp L^*K$.
\end{enumerate}
\end{proposition}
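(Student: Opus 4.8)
The plan is to reduce everything to the structure of the normal cone of the affine subspace $U$ together with the paramonotonicity machinery already in place. Throughout I would record the standard fact that, for a closed affine subspace $U$ with parallel linear subspace $U-U$, one has $N_U(x)=(U-U)^\perp$ when $x\in U$ and $N_U(x)=\varnothing$ otherwise. In particular $N_U=\partial\iota_U$ is a subdifferential operator and hence paramonotone, so $A$ and $B$ are both paramonotone and \cref{thm:paraZK} is available.

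For \cref{p:affpara1}, I would first unwind the inclusion $0\in N_U(x)+L^*BLx$. Since the sum is empty whenever $x\notin U$, any $x\in Z$ forces $x\in U$, which already gives $Z\subseteq U$. For $x\in U$ I substitute $N_U(x)=(U-U)^\perp$, so that $x\in Z$ becomes the condition that there exists $b\in B(Lx)$ with $L^*b\in(U-U)^\perp$ (using that $(U-U)^\perp$ is a subspace, hence invariant under negation). The remaining task is purely formal: I would verify that this condition describes exactly $U\cap (L^*BL)^{-1}((U-U)^\perp)$, and that the two displayed set-expressions coincide via the relation-inverse identity $(L^*\circ B\circ L)^{-1}=L^{-1}\circ B^{-1}\circ L^{-*}$ (reversal of order under inversion of a composition of relations).

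For \cref{p:affpara2}, I would invoke \cref{thm:paraZK} to get $K=K_z$ for every $z\in Z$, and then compute $K_z=(-L^{-*}Az)\cap(BLz)$ from \cref{eq:Kx}. Since $z\in Z\subseteq U$, we have $Az=N_U(z)=(U-U)^\perp$; because $L^{-*}((U-U)^\perp)$ is the preimage of a subspace under $L^*$, it is itself a linear subspace and thus invariant under negation, so $-L^{-*}((U-U)^\perp)=L^{-*}((U-U)^\perp)$. This yields $K=L^{-*}((U-U)^\perp)\cap(BLz)$ and the stated inclusion $K\subseteq L^{-*}((U-U)^\perp)$.

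Finally, \cref{p:affpara3} follows by combining the two containments just obtained: $Z\subseteq U$ gives $Z-Z\subseteq U-U$, while \cref{p:affpara2} gives $L^*K\subseteq(U-U)^\perp$; orthogonality of a subspace and its annihilator then yields $(Z-Z)\perp L^*K$ directly, without even needing \cref{c:para1}\cref{c:para13b}. The only genuinely delicate point is the bookkeeping of the set-valued inverses in \cref{p:affpara1}; once the composition-reversal identity is in hand, the rest is routine.
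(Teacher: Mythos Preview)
Your proposal is correct and follows essentially the same route as the paper. For \cref{p:affpara2} and \cref{p:affpara3} you match the paper almost verbatim; for \cref{p:affpara1} you unwind the primal inclusion $0\in N_U(x)+L^*BLx$ directly, whereas the paper writes the same computation through the saddle-point characterization $x\in Z\Leftrightarrow(\exists y)\,(-L^*y\in Ax\land Lx\in B^{-1}y)$, but this is the identical argument with the witness $b\in B(Lx)$ renamed---and as you implicitly note, neither version actually needs paramonotonicity for part~\cref{p:affpara1}.
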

\begin{proof}
For every $x\in X$, we have 
\begin{equation}
Ax=\begin{cases}
(U-U)^\perp,&\ift x\in U;\\
\hfil\varnothing,&\ift x\notin U,
\end{cases}
\quad\;\text{and}\quad\;
A^{-1}x=\begin{cases}
U,&\ift x\in (U-U)^\perp;\\
\varnothing,&\text{otherwise}.
\end{cases}
\end{equation}

\cref{p:affpara1}:
Fix $x\in X$. 
From \cref{c:para1}\cref{c:para12}, we have the equivalences
\begin{subequations}
\begin{align}
x\in Z
&\siff (\exists y\in Y)\;\; 
Lx\in B^{-1}(y) \land -L^*y\in Ax \\
&\siff (\exists y\in Y)\;\; 
x\in L^{-1}B^{-1}(y) \land x\in U \land y\in -L^{-*}((U-U)^\perp)\\
&\siff (\exists y\in Y)\;\; 
x\in U \cap L^{-1}B^{-1}(y) \land y\in L^{-*}((U-U)^\perp)\\
&\siff 
x\in U\cap L^{-1}B^{-1}L^{-*}((U-U)^\perp),
\end{align}
\end{subequations}
which yields the conclusion. 

\cref{p:affpara2}: 
Let $z\in Z \subseteq U$ and $y\in Y$. 
By \cref{thm:paraZK}, $K_z=K$. 
Thus 
\begin{subequations}
\begin{align}
y\in K
&\siff y \in -L^{-*}Az \cap BLz\\
&\siff 
z \in U \land  y\in L^{-*}((U-U)^\perp) \cap BLz
\\
&\siff 
y\in L^{-*}((U-U)^\perp) \cap BLz, 
\end{align}
\end{subequations}
which yields the conclusion.

\cref{p:affpara3}: 
\cref{p:affpara1} and \cref{p:affpara2} yield
$Z\subseteq U$ and $K\subseteq L^{-*}((U-U)^\perp)$.
Hence 
$Z-Z\subseteq U-U$ and $L^*(K) \subseteq (U-U)^\perp$, 
and we are done. 
\end{proof}

\begin{remark} 
\cref{p:affpara}\cref{p:affpara3} was an assumption in various results 
considered earlier: 
\cref{t:proj}\cref{t:proj3}, 
\cref{t:parasha}\cref{t:parasha2},
\cref{c:proj}\cref{c:proj3}, and
\cref{c:parasha}\cref{c:parasha2}. 
\end{remark}

\section{Dealing with more than two operators via the product space}

\label{s:prod}

In this last section, we illustrate how problems that are more general 
than \cref{e:p} can be handled using a product space approach. 
To this end, we assume assume that 
\begin{equation}
\text{$Y_1,\ldots,Y_n$ are real Hilbert spaces, 
each $L_i\colon X\to Y_j$ is continuous and linear,}
\end{equation}
and 
\begin{equation}
\text{$B_1,\ldots,B_n$ are maximally monotone on $Y_1,\ldots,Y_n$,}
\end{equation}
respectively. 
The problem of interest is to 
\begin{equation}
\label{e:P}
\text{Find $x\in X\;\;$ such that $\;\;0\in Ax+ \sum_{j=1}^nL_j^*B_jL_jx$.}
\end{equation}
We utilize the usual product Hilbert space 
\begin{equation}
Y = Y_1\times \cdots \times Y_m,\;\;
L\colon X\to Y \colon x \mapsto (L_jx)_{j=1}^n,
\;\;\text{and}\;\;
B = B_1\times \cdots \times B_n. 
\end{equation}
Then 
\begin{equation}
x \;\; \text{solves}\;\;
0\in Ax + L^*BL x
\quad\siff\quad
x
\;\;\text{solves}\;\;\cref{e:P},
\end{equation}
which allows us to tap into the theory developed above. 
We will not translate all results to this framework; rather, 
we focus on some highlights. 
The dual problem of $(A,L,B)$ asks to 
find $y\in Y$ such that 
$0\in B^{-1}y -L A^{-1}(-L^*y)$, 
i.e., 
\begin{equation}
\label{e:D}
\text{Find $(y_1,\ldots,y_n)\in Y$ such that} 
\;\;(\forall i\in\{1,\ldots,m\})\;\;
0 \in B_i^{-1}(y_i)-L_iA^{-1}\big(-(L_1^*y_1+\cdots+L_m^*y_m)\big). 
\end{equation}

\begin{proposition}
Suppose that $A,B_1,\ldots,B_m$ are paramonotone and 
that $y\in Y$ solves \cref{e:D}. 
Then the entire set of primal solutions of \cref{e:P} is given by
\begin{equation}
Z = L^{-1}B^{-1}y \cap A^{-1}(-L^*y) = 
\bigcap_{i\in\{1,\ldots,m\}}L_i^{-1}B_i^{-1}y_i 
\cap A^{-1}\big(-(L_1^*y_1+\cdots+L_m^*y_m)\big). 
\end{equation}
\end{proposition}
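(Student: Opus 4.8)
The plan is to reduce the claim to the two-operator theory already developed: recognize the product operator $B = B_1 \times \cdots \times B_m$ as paramonotone, so that the triple $(A,L,B)$ falls under the paramonotone recovery machinery, and then unpack the product-space notation.

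First I would record the one genuinely new ingredient: \emph{a finite product of paramonotone operators is paramonotone}. Monotonicity of $B$ is automatic, since for $\mathbf{y}_0 = (y_{0,i})_{i}$, $\mathbf{y}_1 = (y_{1,i})_{i}$ and corresponding $\mathbf{v}_0,\mathbf{v}_1$ one has $\scal{\mathbf{y}_0 - \mathbf{y}_1}{\mathbf{v}_0 - \mathbf{v}_1} = \sum_{i=1}^m \scal{y_{0,i} - y_{1,i}}{v_{0,i} - v_{1,i}}$, a sum of nonnegative terms. For paramonotonicity, suppose $(\mathbf{y}_0,\mathbf{v}_0)$ and $(\mathbf{y}_1,\mathbf{v}_1)$ lie in $\gra B$ with $\scal{\mathbf{y}_0 - \mathbf{y}_1}{\mathbf{v}_0 - \mathbf{v}_1} = 0$. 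Because the nonnegative coordinate pairings sum to zero, each one vanishes; applying the paramonotonicity of each $B_i$ coordinatewise gives $(y_{0,i}, v_{1,i}) \in \gra B_i$ and $(y_{1,i}, v_{0,i}) \in \gra B_i$ for every $i$, whence $(\mathbf{y}_0, \mathbf{v}_1) \in \gra B$ and $(\mathbf{y}_1, \mathbf{v}_0) \in \gra B$.

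With both $A$ and $B$ paramonotone, the theory applies verbatim to $(A,L,B)$. Since $y$ solves \cref{e:D}, which is precisely the coordinatewise form of the dual inclusion $0 \in B^{-1}y - LA^{-1}(-L^*y)$ defining $K$, we have $y \in K$. Then \cref{thm:paraZK} (equivalently \cref{r:pararecovery}) yields $Z = Z_y = (L^{-1}B^{-1}y) \cap A^{-1}(-L^*y)$, which is the first claimed equality.

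Finally I would translate the product-space expressions into coordinates. Using $B^{-1} = B_1^{-1} \times \cdots \times B_m^{-1}$ together with $Lx = (L_i x)_{i=1}^m$, the condition $Lx \in B^{-1}y$ is equivalent to $L_i x \in B_i^{-1} y_i$ for every $i$, so $L^{-1}B^{-1}y = \bigcap_{i=1}^m L_i^{-1} B_i^{-1} y_i$; and the product structure gives $L^* y = L_1^* y_1 + \cdots + L_m^* y_m$, so that $A^{-1}(-L^*y) = A^{-1}(-(L_1^*y_1 + \cdots + L_m^*y_m))$. Combining these produces the second equality. The only real content is the product-paramonotonicity lemma; the remainder is an appeal to the established recovery result and routine unpacking of the product notation, so I anticipate no substantive obstacle beyond stating the lemma cleanly.
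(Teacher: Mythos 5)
Your proposal is correct and follows exactly the paper's route: the paper's proof is simply ``The assumption implies that $B$ is paramonotone. Hence the result follows from \cref{r:pararecovery}.'' You merely supply the details the paper leaves implicit --- the verification that a finite product of paramonotone operators is paramonotone, and the coordinatewise unpacking of $L^{-1}B^{-1}y$ and $L^*y$ --- both of which are carried out correctly.
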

\begin{proof}
The assumption implies that $B$ is paramonotone. 
Hence the result follows from  \cref{r:pararecovery}. 
\end{proof}

Next, we record a consequence of \cref{t:feas} in the current setting.

\begin{corollary}
\label{c:feasprod}
Let $U,V_1,\ldots,V_n$ be closed convex subsets of 
$X,Y_1,\ldots,Y_n$, respectively. 
Assume that $U \cap \bigcap_{j=1}^n L^{-1}(V_j)\neq\varnothing$, 
that $A=N_U$, and that each $B_j=N_{V_j}$. 
Set $V := V_1\times \cdots\times V_n$. 
Then the following hold: 
\begin{enumerate}
\item 
\label{c:feasprod1}
$Z = U \cap \bigcap_{j=1}^n L^{-1}(V_j)$. 
\item 
\label{c:feasprod2}
$K = N_{V-LU}(0)$. 
\item 
\label{c:feasprod3}
If $U$ and each $V_j$ are cones, then 
$K=(V_1^\ominus\times\cdots\times V_n^\ominus)
\cap L^{-*}(U^\oplus)$. 
\item 
\label{c:feasprod4}
If $U$ and each $V_j$ are linear subspaces, then 
$K=(V_1^\perp\times\cdots\times V_n^\perp)
\cap L^{-*}(U^\perp)$. 
\item 
\label{c:feasprod5}
If 
$U \cap \bigcap_{j=1}^n L_j^{-1}(\inte V_j)\neq\varnothing$, 
 then $K=\{0\}$. 
\end{enumerate}
\end{corollary}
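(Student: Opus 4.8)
The plan is to recognize this corollary as nothing more than the product-space instantiation of \cref{t:feas}. The single bridging fact is that the product operator $B = B_1 \times \cdots \times B_n = N_{V_1} \times \cdots \times N_{V_n}$ coincides with the normal-cone operator $N_V$ of the product set $V = V_1 \times \cdots \times V_n$; once this is in hand, every assertion drops out by translating the corresponding part of \cref{t:feas} through elementary product identities.

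First I would verify $B = N_V$: for $y = (y_1, \ldots, y_n) \in V$ one has $u = (u_j)_{j=1}^n \in N_V(y)$ if and only if each $u_j \in N_{V_j}(y_j)$, so that $N_V = N_{V_1} \times \cdots \times N_{V_n} = B$. At the same time I would record the identity $L^{-1}(V) = \bigcap_{j=1}^n L_j^{-1}(V_j)$, which is immediate from $Lx = (L_j x)_{j=1}^n$ and the Cartesian structure of $V$. Together these two observations show that the standing hypotheses of \cref{t:feas}---namely $A = N_U$, $B = N_V$, and $U \cap L^{-1}(V) \neq \varnothing$---are precisely the hypotheses assumed here.

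With this identification, \cref{c:feasprod}\cref{c:feasprod1} is immediate from \cref{t:feas}\cref{t:feasZ}, and \cref{c:feasprod}\cref{c:feasprod2} follows from \cref{t:feas}\cref{t:feasK}. For \cref{c:feasprod}\cref{c:feasprod3} and \cref{c:feasprod}\cref{c:feasprod4} I would additionally invoke the facts that the polar of a product of cones is the product of polars, $V^\ominus = V_1^\ominus \times \cdots \times V_n^\ominus$, and likewise that the orthogonal complement of a product of linear subspaces is the product of complements, $V^\perp = V_1^\perp \times \cdots \times V_n^\perp$; substituting these into \cref{t:feas}\cref{t:feascones} and \cref{t:feas}\cref{t:feaslins} yields the stated formulas. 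Finally, for \cref{c:feasprod}\cref{c:feasprod5} I would use that, for a finite product, $\inte V = \inte V_1 \times \cdots \times \inte V_n$; hence any $x \in U \cap \bigcap_{j=1}^n L_j^{-1}(\inte V_j)$ satisfies both $Lx \in \inte V$ and $Lx \in LU$, so that $\inte(V) \cap LU \neq \varnothing$ and \cref{t:feas}\cref{t:feasinte} forces $K = \{0\}$.

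There is no real obstacle here: the entire argument is a faithful translation, and the only points requiring a word of care are the product identities for normal cones, polars, complements, and interiors---all of which are standard and hold precisely because the product is finite. Consequently the proof is short, resting entirely on the identification $B = N_V$.
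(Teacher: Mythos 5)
Your proposal is correct and takes essentially the same approach as the paper: the paper records \cref{c:feasprod} as a direct consequence of \cref{t:feas} in the product-space setting and supplies no further proof. Your bridging identities --- $B=N_{V}$, $L^{-1}(V)=\bigcap_{j=1}^n L_j^{-1}(V_j)$, and the product formulas for polars, orthogonal complements, and interiors (valid here because the product is finite) --- are precisely the routine details the paper leaves implicit.
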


Finally, to find primal and dual solutions, one may iterate the 
Chambolle-Pock operator $\bT$. 
In the present set up, it is given by the following formula: 

\begin{subequations}
\begin{align} \label{e:Tprod}
\bT \colon X\times (Y_1\times\cdots \times Y_n) 
&\to X\times (Y_1\times\cdots \times Y_n)\\
\begin{bmatrix}x\\y_1\\\vdots\\y_n\end{bmatrix} 
&\mapsto 
  \begin{bmatrix}
  x^+\\y_1^+\\\vdots\\y_n^+
  \end{bmatrix}
  =\begin{bmatrix}
  J_{\sigma A}\big(x-\sigma \sum_{j=1}^n L_j^*y_j\big)\\
  J_{\tau B_1^{-1}}\big(y_1+\tau L_1(2x^+-x)\big)\\
  \vdots\\
  J_{\tau B_n^{-1}}\big(y_n+\tau L_n(2x^+-x)\big)\\
  \end{bmatrix}. 
\end{align}
\end{subequations}

Of course, other splitting methods can be applied to solve \cref{e:P}. 
We refer the reader to the nice recent book by 
Ryu and Yin \cite{RyuYin} and the nice recent survey 
by Condat, Kitahara, Contreras, and Hirabayashi \cite{Condat}. 

\section*{Acknowledgments}
{ The authors thank two reviewers and the associate
editor 
for their thoughtful and constructive remarks.}
The research of HHB and WMM was supported by Discovery Grants from 
the Natural Sciences and Engineering Research Council of Canada. 
The research of SS was supported by a postdoctoral fellowship from the Natural Sciences and Engineering Research Council of Canada.


\begin{thebibliography}{999}
\seppthree


\bibitem{AT}
H.\ Attouch,  M.\ Th{\' e}ra:
A general duality principle for the sum of two operators,
\emph{Journal of Convex Analysis}~3 (1996): 1--24.
\url{https://www.heldermann.de/JCA/jca03.htm}

\bibitem{BBHM}
H.H.\ Bauschke, R.I.\ Bo\c{t}, W.L.\ Hare, W.M.\ Moursi:
Attouch-Th\'era duality revisited:
paramonotonicity and operator splitting,
\emph{Journal of Approximation Theory}~164 (2012), 1065--1084. 
\url{https://doi.org/10.1016/j.jat.2012.05.008}

\bibitem{BC2017}
H.H.\ Bauschke, P.L.\ Combettes: 
\emph{Convex Analysis and Monotone Operator Theory in Hilbert Spaces},
2nd edition, Springer, 2017.
\url{https://doi.org/10.1007/978-3-319-48311-5}


\bibitem{31}
H.H.\ Bauschke, P.L.\ Combettes, D.R.\ Luke: 
Finding best approximation pairs relative to two closed convex sets in Hilbert spaces, 
\emph{Journal of Approximation Theory}~127 (2005), 178--192.
\url{https://doi.org/10.1016/j.jat.2004.02.006}


\bibitem{BMBook}
H.H.\ Bauschke, W.M.\ Moursi:
\emph{An Introduction to Convexity, Optimization, and Algorithms},
Society for Industrial and Applied Mathematics, 2023.
\url{https://doi.org/10.1137/1.9781611977806}


\bibitem{BMSW}
H.H.\ Bauschke, W.M.\ Moursi, S.\ Singh, X.\ Wang:
On the Bredies-Chenchene-Lorenz-Naldi algorithm: linear relations and strong convergence, 
\emph{SIAM Journal on Optimization}~35 (2025), 1963--1992. 
\url{https://doi.org/10.1137/23M1587919}

\bibitem{BWY2014}
H.H.\ Bauschke, X.\ Wang, L.\ Yao:
Rectangularity and paramonotonicity of maximally monotone operators, 
\emph{Optimization}~63 (2014), 487--504.
\url{https://doi.org/10.1080/02331934.2012.707653}

\bibitem{BBH}
A.\ Berk, S.\ Brugiapaglia, T.\ Hoheisel:
LASSO reloaded: a variational analysis perspective 
with applications to compressed sensing,
\emph{SIAM Journal on Mathematics of Data Science}~5 (2023), 829--1190.
\url{https://doi.org/10.1137/22M1498991}


\bibitem{BCLN}
K.\ Bredies, E.\ Chenchene, D.A.\ Lorenz, E.\ Naldi:
Degenerate preconditioned proximal point algorithms,
\emph{SIAM Journal on Optimization}~32 (2022), 2376--2401. 
\url{https://doi.org/10.1137/21M1448112}

\bibitem{BAC}
L.M.\ Brice\~{n}o-Arias, P.L.\ Combettes: 
A monotone+skew splitting model for composite monotone inclusions in duality, 
\emph{SIAM Journal on Optimization} (2011) 21, 1230--1250. 
\url{https://doi.org/10.1137/10081602X}


\bibitem{Paramonotone}
Y.\ Censor, A.N.\ Iusem, S.A.\ Zenios: 
An interior point method with Bregman functions for the variational inequality problem with paramonotone operators. 
\emph{Mathematical Programming}~81 (1998), 373--400. \url{https://doi.org/10.1007/BF01580089}

\bibitem{CP}
A.\ Chambolle, T.\ Pock:
A First-Order Primal-Dual Algorithm for Convex Problems with Applications to Imaging,
\emph{Journal of Mathematical Imaging and Vision}~40 (2011), 120--145. \url{https://doi.org/10.1007/s10851-010-0251-1}

\bibitem{CPsurvey}
A.\ Chambolle, T.\ Pock:
An introduction to continuous optimization in imaging, 
\emph{Acta Numerica}~25 (2016), 161--319.
\url{https://doi.org/10.1017/S096249291600009X}

\bibitem{theusual}
P.L.\ Combettes:
The geometry of monotone operator splitting methods, 
\emph{Acta Numerica}~33 (2024), 2024.
\url{https://doi.org/10.1017/S0962492923000065}

\bibitem{Condat}
L.\ Condat, D.\ Kitahara, A.\ Contreras, A.\ Hirabayashi:
Proximal splitting algorithms for convex optimization, 
\emph{SIAM Review}~65 (2023), 375--435. 
\url{https://doi.org/10.1137/20M1379344}


\bibitem{EF}
J.\ Eckstein,  M.\ Ferris:
Smooth methods of multipliers for complementarity problems.
\emph{Mathematical Programming}~86 (1999), 65--90. 
\url{https://doi.org/10.1007/s101079900076}

\bibitem{ELP} 
B.\ Evens, P.\ Latafat, P.\ Patrinos:
Convergence of the Chambolle-Pock algorithm 
in the absence of monotonicity, 
\url{https://doi.org/10.48550/arXiv.2312.06540}

\bibitem{EPLP}
B.\ Evens, P.\ Pas, P.\ Latafat, P.\ Patrinos:
Convergence of the preconditioned proximal point method and Douglas--Rachford splitting in the absence of monotonicity, 
\emph{Mathematical Programming (Series A)} (2025).
\url{https://doi.org/10.1007/s10107-024-02182-0}

\bibitem{IusPara}
A.N.\ Iusem:
On some properties of paramonotone operators,
\emph{Journal of Convex Analysis}~5 (1998), 269--278.
\url{https://www.heldermann.de/JCA/jca05.htm#jca052}

\bibitem{LP}
P.\ Latafat, P.\ Patrinos:
Asymmetric forward–backward–adjoint splitting for
solving monotone inclusions involving three operators,
\emph{Computational Optimization and Applications}~68 (2017), 
57--93. 
\url{https://doi.org/10.1007/s10589-017-9909-6}


\bibitem{LFLL}
C.\ Li, D.\ Fang, G.\ L\'opez, M.A.\ L\'opez: 
Stable and total Fenchel duality for convex 
optimization problems in locally convex spaces, 
\emph{SIAM Journal on Optimization}~20(4) (2009), 1032--1051. 
\url{https://doi.org/10.1137/080734352}

\bibitem{Pennanen}
T.\ Pennanen:
Dualization of Generalized Equations of Maximal Monotone Type,
\emph{SIAM Journal on Optimization}~10 (2000) 809--835.
\url{https://doi.org/10.1137/S1052623498340448}

\bibitem{Rockafellar}
R.T.\ Rockafellar: \emph{Convex Analysis}, Princeton University Press, 1970. 

\bibitem{Robinson}
S.M.\ Robinson:
Composition duality and maximal monotonicity,
\emph{Mathematical Programming}~85 (1999) 1--13.
\url{https://doi.org/10.1007/s101070050043}

\bibitem{RyuYin}
E.K.\ Ryu, W.\ Yin:
\emph{Large-Scale Convex Optimization}, 
Cambridge University Press, 2023. 
\url{https://doi.org/10.1017/9781009160865}

\bibitem{Tib}
R.J.\ Tibshirani:
Regression shrinkage and selection via the lasso, 
\emph{Journal of the Royal Statistical Society Series B}~58 
(1996), 267--288. 

\bibitem{Tibuni}
R.J.\ Tibshirani:
The lasso problem and uniqueness,
\emph{Electronic Journal on Statistics}~7(2013), 1456--1490. 
\url{https://doi.org/10.1214/13-EJS815} 


\end{thebibliography}
\end{document}